\definecolor{darkgreen}{rgb}{0,0.5,0}
\definecolor{darkred}{rgb}{0.7,0,0}
\theoremstyle{plain}
\newtheorem{lemma}{Lemma}[section]
\newtheorem{thm}[lemma]{Theorem}
\newtheorem{prop}[lemma]{Proposition}
\newtheorem{cor}[lemma]{Corollary}
\theoremstyle{definition}
\newtheorem{defn}[lemma]{Definition}
\newtheorem{rmk}[lemma]{Remark}
\numberwithin{equation}{section}
\newcommand{\m}{\ensuremath{{\cal M}}}
\newcommand{\pl}[2]{{\frac{\partial #1}{\partial #2}}}
\newcommand{\be}{\beta}
\newcommand{\Om}{\Omega}
\newcommand{\ep}{\varepsilon}
\newcommand{\R}{\ensuremath{{\mathbb R}}}
\newcommand{\N}{\ensuremath{{\mathbb N}}}
\newcommand{\C}{\ensuremath{{\mathbb C}}}
\newcommand{\downto}{\downarrow}
\newcommand{\upto}{\nearrow}
\newcommand{\lap}{\Delta}
\DeclareMathOperator{\Vol}{vol}
\newcommand{\beq}{\begin{equation}}
  \newcommand{\eeq}{\end{equation}}
\newcommand{\beqa}{\begin{equation}\begin{aligned}}
    \newcommand{\eeqa}{\end{aligned}\end{equation}}
\newcommand{\brmk}{\begin{rmk}}
  \newcommand{\ermk}{\end{rmk}}
\newcommand{\partref}[1]{\hbox{(\csname @roman\endcsname{\ref{#1}})}}
\newcommand{\half}{\frac{1}{2}}
\newcommand{\Ric}{{\mathrm{Ric}}}
\newcommand*{\ee}{\mathop{\mathrm{e}}\nolimits}
\newcommand*\spt{\mathop{\mathrm{spt}}\nolimits}
\newcommand*\dist{\mathop{\mathrm{dist}}\nolimits}
\newcommand*\arsinh{\mathop{\mathrm{arsinh}}\nolimits}
\newcommand*\Mf{\mathcal{M}}
\newcommand*\Disc{\mathcal{D}}
\newcommand*\gBall{\mathcal{B}}
\newcommand*\Sph{\mathcal{S}}
\newcommand*\rSph{\mathbb{S}}
\newcommand*\pddt{\frac{\partial}{\partial t}}
\newcommand*\dx{\mathrm{d}x}
\newcommand*\dell{\mathrm{d}\ell}
\newcommand*\dy{\,\mathrm{d}y}
\newcommand*\dtheta{\mathrm{d}\theta}
\newcommand*\dttheta{\mathrm{d}\tilde\theta}
\newcommand*\dz{\mathrm{d}z}
\title{{\sc Ricci flows with unbounded curvature}
}
\author{Gregor Giesen and Peter M. Topping}
\date{\today}
\begin{document}
\maketitle

\begin{abstract}
  We show that any noncompact Riemann surface admits a complete Ricci flow $g(t)$, $t\in [0,\infty)$, which has unbounded curvature for all $t\in [0,\infty)$.
\end{abstract}

\section{Introduction}

A Ricci flow is a smooth family of Riemannian metrics $g(t)$ on a manifold $\Mf$, for $t$ in some time interval, which satisfies the nonlinear PDE
\begin{equation}
  \label{eq:ricci-flow}
  \pddt g(t) = -2\Ric\bigl[g(t)\bigr].
\end{equation}
According to Hamilton \cite{Ham82} and Shi \cite{Shi89}, given a complete bounded curvature metric $g_0$ on $\Mf$, there exists a Ricci flow $g(t)$ with $g(0)=g_0$, for $t\in [0,T]$ (some $T>0$) which is complete and of bounded curvature (at each time $t\in [0,T]$).
In this paper, we will consider the manifold $\Mf$ to be a
surface, and in this case the flow preserves the conformal structure
(see Section \ref{ssect:ev-cf}).

A rich theory has been developed for this flow, most of which collapses if we drop the assumption of bounded curvature within the flow.
However, Ricci flows with potentially unbounded curvature arise naturally.
Topping \cite{Top10} and Giesen-Topping \cite{GT11} developed a theory (see Theorem \ref{thm:ic-existence} below) which allows one to flow a completely general surface, and in particular one of unbounded curvature. Clearly such a flow would then have unbounded Gaussian curvature $K$ in the sense that
$$\sup_{\Mf\times(0,T]} |K[g(t)]|=\infty,$$
say, but possibly only because the curvature is blowing up as $t\downto 0$. A natural question is therefore:
\begin{quote}
  {\em
    Can there exist a complete Ricci flow $g(t)$, $t\in [0,T]$, on a surface, with unbounded curvature for some $t\in (0,T)$, i.e. $\sup_{\Mf} |K[g(t)]|=\infty$?
  }
\end{quote}

It turns out that 2D Ricci flow has a mechanism for ensuring that the curvature \emph{cannot} be unbounded below at later times. Indeed, in \cite{Che09} B.-L. Chen
proves a very general apriori estimate for the scalar curvature of a Ricci flow (in any dimension)
without requiring anything but the completeness of the
solution, and in 2D this implies:
\begin{thm}{\rm(B.-L. Chen \cite{Che09}.)}
  Let $\bigl(g(t)\bigr)_{t\in(0,T]}$ be a
  complete Ricci flow on a surface $\Mf^2$. Then
  \[ K\bigl[g(t)\bigr] \ge -\frac1{2t}\qquad\text{for all }t\in(0,T]. \]
\end{thm}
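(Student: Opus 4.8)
The plan is to prove the lower bound on Gaussian curvature by a maximum-principle argument applied to the evolution of the curvature itself. In two dimensions the Ricci flow $\pddt g = -2\Ric$ becomes $\pddt g = -2Kg$, and a standard computation (differentiating $K$ along the flow and using that the Laplacian depends on $t$) gives the reaction--diffusion equation
\begin{equation}
  \label{eq:K-evolution}
  \pddt K = \lap_{g(t)} K + 2K^2.
\end{equation}
The key observation is that the scalar ODE $\dot\phi = 2\phi^2$ with $\phi(t)\to -\infty$ as $t\downto 0$ has the explicit solution $\phi(t) = -\tfrac{1}{2t}$, which is precisely the claimed barrier. So the strategy is to show that $K[g(t)] \ge \phi(t) = -\tfrac1{2t}$ by comparing $K$ with this subsolution of the heat-type equation \eqref{eq:K-evolution}.

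First I would set up the comparison carefully: fix $\ep>0$ and consider $\phi_\ep(t) := -\tfrac{1}{2(t-\ep)}$ for $t\in(\ep,T]$, which solves $\dot\phi_\ep = 2\phi_\ep^2$ and blows down to $-\infty$ as $t\downto \ep$; at the end one lets $\ep\downto 0$. The difference $w := K - \phi_\ep$ then satisfies $\pddt w = \lap_{g(t)} w + 2(K+\phi_\ep) w$, a linear parabolic inequality with a (locally) bounded zeroth-order coefficient once restricted to $[\ep+\delta, T]$ on a region where $K$ is controlled. Near $t = \ep$ we have $\phi_\ep \to -\infty$ while $K[g(t)]$ stays bounded below for $t$ bounded away from $0$ — indeed on the compact-in-time slice the curvature is smooth — so $w > 0$ for $t$ slightly bigger than $\ep$; the aim is to propagate $w\ge 0$ forward to all $t\le T$. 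If $\Mf$ were compact this is immediate from the classical scalar maximum principle.

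The main obstacle is noncompactness: $\Mf$ is a general complete (possibly noncompact) surface with, a priori, unbounded curvature, so the maximum of $w$ need not be attained and one cannot naively apply the maximum principle. The standard device is a localization/barrier argument: one multiplies by a cutoff or adds a spatial barrier function built from the distance function on $(\Mf, g(t))$ — for instance considering $w + \eta\,\psi$ where $\psi$ grows at spatial infinity and $\eta>0$ is small — and uses that along the Ricci flow the distance function satisfies useful differential inequalities (Perelman-type distance-distortion estimates, or the construction in Chen's paper). This is exactly the content of B.-L. Chen's argument in \cite{Che09}, which produces the estimate in arbitrary dimension for the scalar curvature $\RS$ (yielding $\RS \ge -\tfrac{n}{2t}$, hence in $n=2$, with $\RS = 2K$, the stated $K \ge -\tfrac1{2t}$); the delicate point is showing that the perturbation terms coming from the cutoff can be made to vanish in the limit despite no curvature bound being assumed. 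Since this theorem is quoted from \cite{Che09}, I would either reproduce Chen's localization verbatim in the 2D setting — where it simplifies because one works with the scalar quantity $K$ directly and \eqref{eq:K-evolution} has the clean reaction term $2K^2$ — or simply cite it; either way, the conceptual heart is the ODE comparison $\dot\phi = 2\phi^2 \Rightarrow \phi(t) = -\tfrac1{2t}$ together with a completeness-only maximum principle.
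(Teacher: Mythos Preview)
The paper does not actually prove this theorem; it is stated as a result of B.-L.~Chen and attributed to \cite{Che09}. The paper's accompanying discussion is limited to writing down the evolution equation $\pddt K = \lap K + 2K^2$, observing that the maximum principle would give the bound immediately \emph{if} the curvature were known to be bounded, and then remarking that the unbounded-curvature case is subtle and requires ``deep ideas from the work of Perelman'' as implemented by Chen.

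Your proposal is entirely consistent with this: you identify the same evolution equation, isolate the ODE barrier $\phi(t)=-\tfrac1{2t}$ solving $\dot\phi=2\phi^2$, and correctly flag that the genuine content lies in a completeness-only maximum principle via distance-function localization, which you (rightly) attribute to and would cite from \cite{Che09}. You go somewhat further than the paper by sketching the $\ep$-shift and the linearised equation for $w=K-\phi_\ep$, but ultimately both you and the paper defer the hard step to Chen. There is nothing to correct here; if anything, your outline is a slightly more detailed gloss on a result the paper simply quotes.
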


Note that under Ricci flow, the Gaussian curvature evolves
(see e.g. \cite[Proposition 2.5.4]{Top06}) according to
$$\pl{K}{t}=\lap K +2K^2,$$
so if we knew that the curvature were bounded, then the maximum principle would apply and we could deduce Chen's result easily. However, Ricci flows with unbounded curvature are much more subtle, and Chen requires deep ideas from the work of Perelman to prove this uniform lower curvature bound. It is interesting to dwell on how Ricci flow stretches regions of extreme negative curvature (reducing the curvature) and keeps uncontrolled negative curvature `out at infinity', in order to ensure that this lower bound holds.

While Chen's result gives a good lower bound for the curvature, the question remains as to whether the curvature can ever be unbounded above. The purpose of this paper is to demonstrate that the answer is emphatically yes:

\vskip 10pt

\begin{thm}{\rm \bf(Main theorem.)}
  \label{thm:ex-unbounded-curv}
  On every noncompact Riemann surface $\Mf^2$ there exists a complete immortal Ricci flow
  $\bigl(g(t)\bigr)_{t\in[0,\infty)}$ with unbounded curvature $\sup_\Mf
  K[g(t)]=\infty$ for all $t\in[0,\infty)$.
\end{thm}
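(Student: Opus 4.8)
The plan is to produce the required flow as the instantaneously complete Ricci flow issuing from a suitably engineered initial metric $g_0$, using Theorem \ref{thm:ic-existence} for existence, completeness and immortality, and then to arrange $g_0$ so that the flow carries infinitely many high--curvature ``fingers'' escaping to infinity, each of which evolves in isolation for a long time.

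Concretely, I would fix any complete smooth conformal metric $\bar g$ on $\Mf$ and choose pairwise disjoint small conformal coordinate discs $D_1,D_2,\dots\subset\Mf$ leaving every compact subset of $\Mf$, together with sequences $\ell_k\downto 0$ and $L_k\upto\infty$. Inside each $D_k$, I replace $\bar g$ by a truncated copy of Hamilton's cigar soliton: a metric which near its tip has Gaussian curvature comparable to $\ell_k^{-2}$, is isometric to a flat cylinder of circumference $\ell_k$ over a central portion of length $L_k$, and is glued smoothly to $\bar g$ across $\partial D_k$; since a truncated cigar is conformally a disc, this stays within the given conformal class. Because the fingers have finite depth and $D_k\to\infty$, the resulting conformal metric $g_0$ is complete and smooth (of unbounded curvature), so Theorem \ref{thm:ic-existence} gives a complete immortal Ricci flow $g(t)$, $t\in[0,\infty)$, with $g(0)=g_0$. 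In particular $\sup_\Mf K[g_0]=\infty$, so the case $t=0$ is settled.

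The heart of the argument is to show $\sup_\Mf K[g(t)]=\infty$ for every $t>0$, and this is where I expect the real work. I would prove a decoupling estimate: for $0\le t\le c\,L_k^2$ the restriction of $g(t)$ to $D_k$ stays close to the evolution of the isolated capped cylinder of circumference $\ell_k$, because diffusion across the long flat neck of finger $k$ cannot transmit the influence of the rest of $\Mf$ to the tip before time of order $L_k^2$. In the conformal formulation the flow solves the logarithmic fast diffusion equation $\partial_t u=\Delta\log u$ with $K[g(t)]=-\tfrac{1}{2}\partial_t\log u$, and the decoupling would be obtained by sandwiching $u$ between explicit sub- and supersolutions that coincide with the finger data inside $D_k$ and are patched in along the flat neck; this has to be done within the instantaneously complete framework, since $g_0$ has unbounded curvature and the standard quasilinear parabolic comparison theory does not apply off the shelf. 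Combining the decoupling with the asymptotic stability of the cigar soliton (two-dimensional Ricci flow on $\R^2$ with nonnegative curvature and a cylindrical end remains, modulo diffeomorphism, close to the cigar of that circumference), one concludes $\sup_{D_k}K[g(t)]\ge\tfrac{1}{2}\ell_k^{-2}$ for $0\le t\le c\,L_k^2$.

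Finally, given $t\ge0$ and any $N$, choose $k$ with $c\,L_k^2\ge t$ and $\tfrac{1}{2}\ell_k^{-2}\ge N$, which is possible since $L_k\upto\infty$ and $\ell_k\downto 0$; then $\sup_\Mf K[g(t)]\ge N$, hence $\sup_\Mf K[g(t)]=\infty$ for all $t\in[0,\infty)$, as required. The principal obstacle is the decoupling estimate together with the cigar stability input: the degeneracy of $\partial_t u=\Delta\log u$ and the unbounded initial curvature force one to build and verify the comparison metrics by hand within the instantaneously complete theory rather than quote them from classical parabolic theory, and one must check that the inserted barriers do not themselves degenerate along the necks before time of order $L_k^2$ — a point where Chen's lower bound $K\ge-\tfrac{1}{2t}$ is a convenient safeguard.
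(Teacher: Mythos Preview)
Your high-level strategy coincides with the paper's: build $g_0$ by grafting truncated cigars at scales $\alpha_k\downto 0$ and lengths $R_k\upto\infty$ into disjoint discs escaping to infinity, flow with Theorem~\ref{thm:ic-existence}, and show each cigar retains large curvature for a long time via barriers. However, two of the steps you flag as ``the principal obstacle'' are left genuinely unresolved, and the paper fills them with specific tools you do not mention.

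First, the boundary control needed to run the barrier/comparison argument. You say the sub- and supersolutions are ``patched in along the flat neck'', but to apply the maximum principle on the truncated cigar region you need an a priori bound on the conformal factor (equivalently on $|K|$) along $\partial\gBall_{g(0)}(p;\tilde R)$ for all $t\in[0,T]$, \emph{before} you know anything about $g(t)$ there. Chen's lower bound $K\ge-\tfrac{1}{2t}$ does not give this; what does is Chen's pseudolocality theorem (Theorem~\ref{thm:chen-pseudo}), applied after lifting the punctured cigar to its universal cover to defeat the collapsing that would otherwise violate the volume hypothesis for large $r_0$. This is Lemma~\ref{lemma:bdd-curv-at-R}, and it is the step that makes the comparison with scaled cigar solutions $\beta^{\pm2}g_\Sigma(\beta^{\mp2}t)$ (Lemma~\ref{lemma:cigar-barriers}) go through on a compact domain with honest parabolic boundary data.

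Second, the curvature lower bound at the tip. You invoke ``asymptotic stability of the cigar soliton'', but no such black-box stability statement is available in the generality you need (arbitrary complete ambient flow, no curvature sign assumption on $\Mf$). The paper instead uses Bol's isoperimetric inequality (Theorem~\ref{thm:bol-isop-ineq}): once the barriers pin down the area and boundary length of a simply connected cap near the tip, Bol's inequality forces $\sup K\ge\varepsilon$ there (Proposition~\ref{prop:lower-bd-curv-1}). This is both more elementary and more robust than a stability argument.

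Two smaller points. Your diffusion heuristic gives the wrong time scale: the cigar solution translates linearly along the neck, $g_\Sigma(t;\ell,\theta)=g_\Sigma(\ell-2t,\theta)$, so a cigar of length $R$ at scale $\alpha$ is protected only for time of order $\alpha R$, not $L_k^2$; in particular you need $\alpha_k R_k\to\infty$, which is an extra constraint beyond $\alpha_k\to 0$, $R_k\to\infty$. And immortality from Theorem~\ref{thm:ic-existence} is not automatic when $\Mf\cong\C$: you must check $\Vol_{g_0}\Mf=\infty$, which the paper does by noting the grafted cigars have areas bounded below.
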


\vskip 10pt

\begin{rmk}
  By taking the Cartesian product of the flows in
  Theorem \ref{thm:ex-unbounded-curv} with $\R^n$, one can construct examples 
  of complete Ricci flows with unbounded curvature in higher dimensions. 
  However, prior to our work Esther
  Cabezas-Rivas and Burkhard Wilking have shown us constructions in
  higher dimensions where one has Ricci flows with unbounded curvature for all
  times which also have positive curvature. See \cite{CW11}.
\end{rmk}
The essential idea behind the proof of Theorem \ref{thm:ex-unbounded-curv} is to construct an initial metric which has a sequence of patches within it that become more and more curved, and are inclined under Ricci flow to retain their high curvature for longer and longer.
We will require a combination of pseudolocality arguments, barrier arguments and an isoperimetric inequality in order to establish appropriate lower curvature bounds for very large times, but even \emph{starting} the Ricci flow with such an uncontrolled metric
has only been possible since recently as a result of the following theorem.

\begin{thm}{\rm (Fragment of \cite[Theorem 1.3]{GT11}.)}
  \label{thm:ic-existence}
  Let $\bigl(\Mf^2,g_0\bigr)$ be any smooth Riemannian surface --
  it need not be
  complete, and could have unbounded curvature.
  Depending on the conformal type, we define $T\in(0,\infty]$ by
  \[ T := \begin{cases}
    \frac1{8\pi}\Vol_{g_0}\Mf & \text{if }(\Mf,g_0)\cong\Sph^2,\\
    \frac1{4\pi}\Vol_{g_0}\Mf & \text{if }(\Mf,g_0)\cong\mathbb
    C\text{ or
    }(\Mf,g_0)\cong\mathbb R\!P^2, \\
    \qquad\infty & \text{otherwise}.
  \end{cases} \]
  Then there exists a smooth Ricci flow $\bigl(g(t)\bigr)_{t\in[0,T)}$
  such that
  \begin{compactenum}
  \item $g(0)=g_0$;
  \item $g(t)$ is instantaneously complete (i.e. complete for all $t\in (0,T)$);
  \item $g(t)$ is maximally stretched\footnote{We call a Ricci flow
      $\bigl(g(t)\bigr)_{t\in[0,T)}$ \textbf{maximally stretched}
      provided that if $\bigl(\tilde g(t)\bigr)_{t\in[0,\tilde T)}$
      is another Ricci flow with $\tilde g(0)\le g(0)$ then $\tilde
      g(t)\le g(t)$ for all $t\in[0,\min\{T,\tilde T\})$.},
  \end{compactenum}
  and this flow is unique in the sense that if
  $\bigl(\tilde g(t)\bigr)_{t\in[0,\tilde T)}$ is
  any other Ricci flow on $\m$ satisfying 1,2 and 3, then
  $\tilde T\leq T$ and $\tilde g(t)=g(t)$ for all $t\in[0,\tilde T)$.
  If $T<\infty$, then we have
  \[ \Vol_{g(t)}\Mf = \left\{\begin{array}{ll}
      8\pi (T-t) & \text{if }(\Mf,g_0)\cong\Sph^2,\\
      4\pi (T-t) & \text{otherwise},
    \end{array}\right\}\quad\longrightarrow\quad 0 \quad\text{ as } t\nearrow
  T, \]
  and in particular, $T$ is the maximal existence time.
\end{thm}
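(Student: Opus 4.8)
# Proof Proposal for Theorem~\ref{thm:ex-unbounded-curv}

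\textbf{Proof proposal.}

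The plan is to take Theorem~\ref{thm:ic-existence} as the existence engine and to put all the work into the choice of the initial metric, engineering its geometry near infinity so that it contains a sequence of regions that are more and more curved and are forced, by virtue of being large and far apart, to retain that curvature for longer and longer under the flow. \emph{Step 1 (reduction).} The round sphere and $\R P^2$ are compact, so a noncompact Riemann surface $\Mf$ is, in the language of Theorem~\ref{thm:ic-existence}, either conformal to $\C$ or falls under ``otherwise''; in the latter case one has $T=\infty$ for \emph{every} conformal metric, and in the former case $T=\infty$ as soon as $\Vol_{g_0}\Mf=\infty$. Hence it suffices to produce a \emph{complete} conformal metric $g_0$ on $\Mf$ with $\Vol_{g_0}\Mf=\infty$ and $\sup_\Mf K[g_0]=\infty$, to run the instantaneously complete, maximally stretched Ricci flow $\bigl(g(t)\bigr)_{t\in[0,\infty)}$ of Theorem~\ref{thm:ic-existence} — which is then immortal, and complete for \emph{all} $t\in[0,\infty)$ since $g(0)=g_0$ is complete — and to prove $\sup_\Mf K[g(t)]=\infty$ for every $t>0$.

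\textbf{Step 2 (construction of $g_0$).} Fix a fixed complete reference metric on $\Mf$ of infinite area, together with a sequence of disjoint conformal coordinate discs $D_k\subset\Mf$ escaping to an end. Choose $\la_k\to\infty$ increasing very slowly and radii $R_k\to\infty$ increasing extremely fast (so that $(\log R_k)^2/\la_k\to\infty$). On a slightly smaller disc $D_k'\Subset D_k$, replace the reference metric by the pullback (under a biholomorphism onto $\{|x|<R_k\}$) of the $\la_k^{-1}$-rescaling of the truncated cigar soliton $\bigl(\R^2,\tfrac{|dx|^2}{1+|x|^2}\bigr)$; since the cigar is conformally a disc, this respects the Riemann surface structure. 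In the collar $D_k\setminus D_k'$ interpolate the conformal factor smoothly, so that the thin cigar end (circumference $\sim\la_k^{-1/2}$) is glued to the reference geometry. The resulting $g_0$ is complete, has infinite area, and satisfies $\sup_{D_k'}K[g_0]\approx 2\la_k\to\infty$, so $\sup_\Mf K[g_0]=\infty$; this already gives the $t=0$ case.

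\textbf{Step 3 (persistence of curvature — the crux).} The point of the cigar model is that it is a \emph{steady} soliton: if the $k$-th region evolved exactly as a cigar, it would merely translate, keeping curvature $\approx 2\la_k$ at its (moving) tip for all time, while its long, nearly flat cylindrical neck would not move at all. Since our region is only a large finite piece, perturbations enter through the collar, but they must diffuse a cylinder-distance $\gtrsim\la_k^{-1/2}\log R_k$ before reaching the tip, so they cannot disturb it until a time $\tau_k\gtrsim\la_k^{-1}(\log R_k)^2$, which by our choices tends to $\infty$. Making this rigorous is where the tools advertised in the introduction enter: an isoperimetric / non-collapsing bound on $g(t)$ keeps a pseudolocality estimate effective near the $k$-th patch and prevents the thin neck from degenerating; explicit sub- and supersolution barriers — for the conformal factor, and for $K$ via the inequality $\partial_t K=\Delta K+2K^2\ge\Delta K$ valid where $K\ge 0$ — trap the flow near the $k$-th tip uniformly close to the cigar for $t\in[0,\tau_k]$; and the maximal stretchedness of Theorem~\ref{thm:ic-existence} is used to compare $g(t)$ from below with model flows. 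Granting this, for any $t>0$ pick $k$ with $\tau_k>t$ (all large $k$ work); among these $k$ one still has $\la_k\to\infty$, and $K[g(t)]\gtrsim\la_k$ near the $k$-th tip for each of them, whence $\sup_\Mf K[g(t)]=\infty$.

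\textbf{Main obstacle.} Everything in Steps~1 and~2 is soft; the genuine difficulty is Step~3, i.e.\ showing that the curvature of the $k$-th patch is not smoothed away prematurely. The naive heat-equation picture has to be upgraded to account for the evolving metric and the uncontrolled ambient geometry: one must control the ``attempted'' pinching of the thin collar (immortality rules it out as an actual singularity, but its effect on the neighbouring tip still has to be bounded via barriers), and one must quantify how the perturbation coming in from the collar decays as it travels down the long cigar neck, for which the isoperimetric inequality supplies the necessary lower bounds on the width and area. Once this persistence estimate is established with a time scale $\tau_k\to\infty$, the theorem follows as in Step~3.
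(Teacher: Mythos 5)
You have written a proof of Theorem \ref{thm:ex-unbounded-curv}, not of the statement quoted above: Theorem \ref{thm:ic-existence} is an external result cited from \cite{GT11}, is not proved in this paper, and your proposal does not prove it either --- it uses it as a black box. I will therefore review your argument as an attempt at the Main Theorem. Your Steps 1 and 2 do match the paper's strategy: reduce to producing an instantaneously complete immortal flow (noncompact plus infinite area forces $T=\infty$ in Theorem \ref{thm:ic-existence}), and build $g_0$ by pasting rescaled truncated cigars into disjoint conformal discs escaping to infinity. These steps are, as you say, soft.

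The genuine gap is Step 3, which is the entire technical content of the paper (Lemmas \ref{lemma:bdd-curv-at-R} and \ref{lemma:cigar-barriers}, Proposition \ref{prop:lower-bd-curv-1}), and the heuristics you offer for filling it point in the wrong directions. First, the picture of perturbations having to ``diffuse'' a distance $L$ down the neck before reaching the tip, giving a protected time $\sim L^2$, is exactly what Theorem \ref{thm:sucked_out1} refutes: an incomplete Ricci flow can drain essentially all the area out of an arbitrarily long truncated cigar in arbitrarily short time, so there is no finite-speed or diffusive protection intrinsic to the neck. The protection comes from \emph{completeness} of the ambient flow via Chen's pseudolocality (Theorem \ref{thm:chen-pseudo}), and the resulting survival time is \emph{linear} in the cigar length (the cigar solution translates at constant speed, see \eqref{eq:cigar-rf-cyl}), not quadratic. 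Second, your proposed barrier for $K$ via $\partial_tK=\Delta K+2K^2\ge\Delta K$ requires a maximum principle for the curvature, which is inadmissible precisely because the curvature is unbounded --- the paper makes this point explicitly when discussing Chen's lower bound. The paper's barriers are instead for the conformal factor $u$, and the boundary control needed to run that maximum principle is itself nontrivial: it comes from pseudolocality applied after puncturing the cigar at its tip and lifting to the universal cover, without which the non-collapsing hypothesis (iii) fails for large $r_0$. Third, the lower curvature bound is not obtained pointwise at the tip by comparison with the model; it is extracted from Bol's isoperimetric inequality (Theorem \ref{thm:bol-isop-ineq}) applied to a region of fixed area $2\pi\beta^2$ around the tip whose boundary length is controlled by the \emph{upper} barrier --- your proposal assigns the isoperimetric inequality the different role of a non-collapsing input to pseudolocality, which is not how the argument closes. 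As written, Step 3 is an acknowledged placeholder rather than a proof, and the mechanisms you sketch for it would not work.
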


\begin{rmk}
  The full theorem from \cite{GT11} includes a description of the
  asymptotics of the flow in partial generality.
  The case that $\Mf$ is compact
  was already known to Hamilton \cite{Ham88} and Chow
  \cite{Cho91}; in particular, as we will need later, 
  they show that under Ricci flow any
  sphere $\bigl(\Sph^2,g_0\bigr)$ converges after rescaling
  to the round sphere.
\end{rmk}

The article is organised as follows: We start by summarising some
specific properties of the two-dimensional Ricci flow
(\S~\ref{ssect:ev-cf}) and known facts about the cigar and its
associated Ricci flow solution (\S~\ref{ssec:cigar}).
In the following main part (\S~\ref{ssec:rf-cont-cigars}) we
prove some facts about Ricci flows starting at a surface
containing a truncated cigar. The main result here will be that the cigar is
mostly preserved for as long as we want regardless of the geometry of
the surrounding surface --- provided the cigar was initially long
enough. The proof involves the construction of suitable barriers using
the maximum principle in conjunction with a version of Perelman's
Pseudolocality theorem due to B.-L. Chen (Theorem \ref{thm:chen-pseudo}).
Exploiting these barriers we will apply an isoperimetric inequality by
G. Bol (Theorem \ref{thm:bol-isop-ineq}) to show that the
maximum of the curvature in that cigar region is universally bounded
from below by a suitable positive constant for as long as we want. Finally, in Section
\ref{sec:ricci-flow-unbounded} we conclude the proof of the Main
Theorem \ref{thm:ex-unbounded-curv} by patching a sequence of longer
and longer shrunken cigars onto an arbitrary Riemannian surface such that both
its curvature is unbounded and if we flow it using Theorem
\ref{thm:ic-existence} the results of Section \ref{sect:rf-cigar}
ensure that the curvature stays unbounded.
We remark that the earlier construction of Cabezas-Rivas and Wilking
\cite{CW11} in higher dimensions is also constructed out of cigars.
\medskip

{\em Acknowledgements:} Both authors were supported by The Leverhulme
Trust. Thanks to Esther Cabezas-Rivas for discussions about \cite{CW11}. 

\section{Ricci flows of surfaces and cigars}
\label{sect:rf-cigar}

\subsection{Evolution of the conformal factor}
\label{ssect:ev-cf}

On a two-dimensional manifold, the Ricci curvature is
simply the Gaussian curvature $K$ times the metric:
$\Ric[g]=K[g]\, g$. The Ricci flow then moves within a fixed conformal
class, and if we pick a local complex coordinate $z=x+\mathrm{i}y$
and write the metric in terms of a scalar conformal factor $u\in C^\infty(\Mf)$
\[ g=\ee^{2u}|\dz|^2 \]
where $|\dz|^2=\dx^2+\dy^2$, then the evolution of the metric's conformal
factor $u$ under Ricci flow is governed by the nonlinear scalar PDE
\begin{equation}
  \label{eq:ricci-flow-cf}
  \pddt u = \ee^{-2u}\Delta u = -K[u].
\end{equation}
where $\Delta := \frac{\partial^2}{\partial x^2} + \frac{\partial^2}{\partial
  y^2}$ is defined in terms of the local coordinates. If we talk about Ricci flow on a Riemann surface, then we require the conformal structures of the flow and the surface to coincide.

\subsection{The cigar}
\label{ssec:cigar}

In this section we collect some known results about Hamilton's cigar soliton.
For further details we refer to \cite[\S2.2.1]{CK04} or
\cite[\S4.3]{CLN06}.

On $\mathbb C$ one can write the cigar metric $g_\Sigma$ in terms of a global
complex coordinate $z=x+\mathrm iy$
\begin{equation}
  \label{eq:eucl-cigar}
  g_\Sigma(z) = \frac{|\dz|^2}{1+|z|^2}\quad\text{with
    Gaussian curvature}\quad K[g_\Sigma](z) =
  \frac{2}{1+|z|^2}.
\end{equation}
The centric geodesic ball of radius $r>0$ corresponds in
complex coordinates to
\[ \gBall_{g_\Sigma} (0;r) = \Disc_{\sinh r} := \bigl\{ z\in\mathbb C:
|z| < \sinh r\bigr\}. \]
Being a steady Ricci soliton, one can associate a self-similar
Ricci flow -- the \emph{cigar solution}
$\bigl(g_\Sigma(t)\big)_{t\in\mathbb R}$ on $\mathbb C$ --
defined by
\begin{equation}
  \label{eq:cigar-rf}
  g_\Sigma(t;z) = \ee^{2u_\Sigma(t,z)}|\dz|^2 =
  \frac{|\dz|^2}{\ee^{4t}+|z|^2}
  \quad\Longrightarrow\quad u_\Sigma(t,z) =
  -\frac12\log\left(\ee^{4t}+|z|^2\right).
\end{equation}
After puncturing at its tip $z=0$, we write the cigar in
cylindrical coordinates $(\ell,\theta)\in\mathbb R\times\rSph^1 =:
\mathcal{C}$ defined by $z= \ee^{\ell+\mathrm i\theta}$ as
\begin{equation}
  \label{eq:cyl-cigar}
  g_\Sigma(\ell,\theta) = \frac{\dell^2+\dtheta^2}{\ee^{-2\ell}+1}
  \le \dell^2+\dtheta^2.
\end{equation}
For large $|z|=\ee^\ell$ the cigar is almost a flat cylinder and therefore we expect the geodesic distance from its tip at
$\ell=-\infty$ to a point $(\ell,\theta)$ to be roughly $\ell$ for large $\ell$.
In fact we have
\begin{align}
  \label{eq:cigar-dist-cyl}
  \ell+\log2 \le \dist_{g_\Sigma}\bigl( (-\infty,\theta),(\ell,\theta)
  \bigr)  &= \arsinh\ee^\ell \le \ell+1 &&\text{for all }\ell\ge0.
\end{align}
Similarly, the area of a ball of large radius $r>0$ around the tip
should be roughly the area of a cylinder of length $\ell\sim r$,
i.e. $\Vol_{g_\Sigma} \sim 2\pi\ell$; more precisely we have the lower
estimates
\begin{align}
  \label{eq:cigar-area-cyl}
  \Vol_{g_\Sigma}\bigl( (-\infty,\ell)\times\rSph^1 \bigr) &=
  2\pi\log\cosh\arsinh\ee^\ell \ge 2\pi\ell &&\text{for all
  }\ell\ge0\\
  \label{eq:cigar-area-cyl-r}
  \text{and}\qquad\Vol_{g_\Sigma}\bigl(\gBall_{g_\Sigma}(0;r)\bigr) &=
  2\pi \log\cosh r \ge 2\pi(r-\log2) &&\text{for all }r\ge0.
\end{align}
The following two estimates quantify further the assertion above
that the cigar is asymptotically a flat cylinder: For any $r>0$ and for
any point in $\mathbb C\setminus\gBall_{g_\Sigma}(0;r)$, i.e. for
all
$(\ell,\theta)\in\bigl(\log\sinh r,\infty\bigr)
\times\rSph^1
\subset\mathcal C$, we have the rough estimates for the metric
\begin{equation}
  \label{eq:cigar-metric-flat}
  \left(1-\frac1{r^2}\right)\left(\dell^2+\dtheta^2\right) \le
  \bigl(\tanh r\bigr)^2\left(\dell^2+\dtheta^2\right)
  \le g_\Sigma(\ell,\theta) \le \dell^2+\dtheta^2
\end{equation}
(because $\bigl(\tanh r\bigr)^2\left(\dell^2+\dtheta^2\right)=g_\Sigma(\log\sinh r,\theta)$)
and for the Gaussian curvature
\begin{equation}
  \label{eq:cigar-curv-flat}
  \sup_{(\log\sinh r,\infty)
    \times\rSph^1} K[g_\Sigma] =
  \frac2{(\cosh r)^2}\le \frac2{r^2}.
\end{equation}
In cylindrical coordinates the self-similarity of the cigar
solution is more obvious: under the Ricci flow it just translates:
\begin{equation}
  \label{eq:cigar-rf-cyl}
  g_\Sigma(t;\ell,\theta) =
  \frac{\dell^2+\dtheta^2}{\ee^{4t-2\ell}+1} = g_\Sigma(\ell-2t,\theta).
\end{equation}
Consequently, thinking again of it as a cylinder for sufficiently
large $\ell>2t$ which now translates in time, we expect the distance to
its tip  to behave like $\sim \ell-2t$, and similarly the area of the
$g_\Sigma$-geodesic ball, centred at the tip, of radius $r$ should behave like
the area of a cylinder of length $r-2t$, i.e. $\sim2\pi(r-2t)$;
more precisely we have the lower estimates
\begin{align}
  \label{eq:cigar-dist-t}
  \dist_{g_\Sigma(t)}\bigl(0, \partial\gBall_{g_\Sigma}(0;r)\big)
  &= \arsinh\ee^{(\log\sinh r) - 2t} \ge r-2t\qquad\qquad\text{and}\\
  \label{eq:cigar-area-t}
  \Vol_{g_\Sigma(t)}\gBall_{g_\Sigma}(0;r) &= 2\pi \log\cosh\arsinh
  \ee^{(\log\sinh r)-2t} \ge 2\pi\bigl( r-2t-\log2 \big).
\end{align}

\subsection{Ricci flows containing cigars}
\label{ssec:rf-cont-cigars}

The Ricci flow we construct to prove Theorem
\ref{thm:ex-unbounded-curv} will start with a metric containing countably many truncated cigars, at smaller and smaller scales, and which are longer and longer.
The proof will rely on each truncated cigar evolving much like a whole cigar would evolve, for a very long time, irrespective of how wild the metric is beyond the cigar part. However, great care is required to establish such behaviour, as is indicated by the following example.
\begin{thm} 
  \label{thm:sucked_out1}
  Given $R>0$ (however large) and $\ep>0$ (however small) there exists a Ricci flow $g(t)$ on $\Disc_{\sinh R}\subset\C$ for $t\in [0,\ep)$ such that $g(0)=g_\Sigma$ (a truncated cigar of arbitrarily long length $R$) but for which
  $$\Vol_{g(t)}\bigl(\Disc_{\sinh R}\bigr)\longrightarrow 0 \quad\text{ as }t\upto \ep.$$
\end{thm}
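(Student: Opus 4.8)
The plan is to build the desired flow not on the disc directly, but by restricting a Ricci flow on a \emph{closed} surface to which the truncated cigar is glued, and then using Theorem \ref{thm:ic-existence} (or the classical compact theory of Hamilton--Chow) together with the monotonicity/maximal-stretching property. Concretely: given $R$ and $\ep$, I want to produce a closed Riemann surface of sphere type, conformally $S^2$, whose metric $\ghat_0$ agrees with $g_\Sigma$ on the embedded disc $\Disc_{\sinh R}$ and whose total area is $8\pi\ep$ (this is possible provided $\ep$ is not too small relative to $R$, which we can arrange since the cigar disc $\Disc_{\sinh R}$ has $g_\Sigma$-area $2\pi\log\cosh R$, only logarithmically large, while we are free to choose $\ep$; if $\ep$ is genuinely tiny we first shrink the whole picture by a large conformal/homothetic factor — under Ricci flow homotheties just rescale time, so shrinking the cigar to a ``truncated cigar of length $R$'' at a smaller scale is harmless, and the statement only asks that $g(0)=g_\Sigma$ up to this scaling, which we can absorb). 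Then the Ricci flow of $(S^2,\ghat_0)$ exists on $[0,T)$ with $T=\frac1{8\pi}\Vol_{\ghat_0}S^2=\ep$ and, by the volume formula in Theorem \ref{thm:ic-existence}, $\Vol_{g(t)}S^2=8\pi(T-t)\to0$ as $t\upto\ep$. Restricting this flow to $\Disc_{\sinh R}$ and observing $g(0)|_{\Disc_{\sinh R}}=g_\Sigma$ gives a Ricci flow on the disc for $t\in[0,\ep)$; since areas are monotone in the region, $\Vol_{g(t)}(\Disc_{\sinh R})\le\Vol_{g(t)}S^2\to0$, as required.

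The first main step is therefore the \emph{gluing construction}: produce a smooth conformal metric on $S^2$ which is exactly $g_\Sigma$ on a disc and has prescribed (small) total area. One takes the standard round $S^2$ minus a small cap, rescales it to have tiny area, and smoothly interpolates in an annular collar between the cigar's asymptotically-cylindrical end (which, by \eqref{eq:cyl-cigar}, looks like $\dell^2+\dtheta^2$ for large $\ell$) and the remaining cap; since the cigar is asymptotic to a flat cylinder and a round cap can be capped onto a cylinder smoothly (this is a routine partition-of-unity interpolation of conformal factors, costing only a bounded amount of extra area in the collar), this is elementary. The second step is to \emph{invoke existence and the volume identity}: here $\Mf\cong S^2$, so Theorem \ref{thm:ic-existence} gives a complete instantaneously-complete flow with $T=\frac1{8\pi}\Vol_{\ghat_0}S^2$ and the stated $\Vol_{g(t)}S^2=8\pi(T-t)$; alternatively, since $\ghat_0$ is a smooth metric on a closed surface this is just classical Hamilton--Chow Ricci flow on $S^2$, for which the volume identity $\ddt\Vol=-\int 2K=-4\pi\chi(S^2)=-8\pi$ is immediate from Gauss--Bonnet. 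The third step is the \emph{restriction and monotonicity}: $g(t)$ is a genuine Ricci flow on every open subset, in particular on $\Disc_{\sinh R}$, it has the right initial data there by construction, and $\Vol_{g(t)}(\Disc_{\sinh R})\le\Vol_{g(t)}S^2\to0$.

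The step I expect to require the most care is reconciling the two constraints on $\ep$ and $R$ simultaneously: we need a genuine truncated cigar of length exactly $R$ sitting inside a sphere of area $8\pi\ep$, and for small $\ep$ the sphere has less area than the honest cigar disc $\Disc_{\sinh R}$ (area $2\pi\log\cosh R$). The resolution — and the only slightly subtle point — is the scaling remark: replacing $g_\Sigma$ by $\la^2 g_\Sigma$ for small $\la$ rescales all areas by $\la^2$ and reparametrises Ricci flow time by $\la^2$, so a ``truncated cigar of length $R$'' should be read at the scale dictated by $\ep$; with this normalisation the cigar disc carries area $\sim\la^2\log\cosh R$ which we can make as small as we like, leaving room inside an $8\pi\ep$-sphere, and the conclusion $\Vol_{g(t)}(\Disc_{\sinh R})\to0$ is scale-invariant in form. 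Once this bookkeeping is fixed, nothing else is delicate — the whole phenomenon is simply that a sphere Ricci flow contracts to zero area in finite time, dragging any subregion's area to zero with it, and the cigar, however long, is powerless to prevent being ``sucked out'' because it is only a finite, indeed logarithmically small, piece of the total area budget.
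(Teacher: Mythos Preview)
Your direct gluing approach has a genuine gap precisely where you flag the difficulty. The theorem asks for $g(0)=g_\Sigma$ on $\Disc_{\sinh R}$ with $R$ and $\ep$ \emph{independently} prescribed, and the unit-scale truncated cigar has area $2\pi\log\cosh R$. If $8\pi\ep<2\pi\log\cosh R$ there is simply no way to embed it isometrically in a sphere of area $8\pi\ep$, so the construction as written fails. Your proposed fix --- replace $g_\Sigma$ by $\la^2 g_\Sigma$ --- does not rescue this: the statement really does demand $g(0)=g_\Sigma$ (unit scale), and under parabolic rescaling both the initial metric and the extinction time scale together, so you cannot shrink the cigar's area without also shrinking $\ep$. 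The claim that ``the statement only asks that $g(0)=g_\Sigma$ up to this scaling'' is a misreading; the domain $\Disc_{\sinh R}$ together with the phrase ``length $R$'' pins the scale to $1$.

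The paper circumvents exactly this obstruction, and it is worth seeing how. It first proves Theorem \ref{thm:sucked_out2}: for any $\ep>0$ one builds a Ricci flow on the closed \emph{unit} disc starting from the \emph{flat} metric with $\sup_\Disc u\to-\infty$ as $t\upto\ep$. The key device in that proof is that the disc is \emph{immersed} (via the exponential map) into a long thin capped cylinder $\rSph^1_r\times[-L,L]$; because the map wraps around the cylinder many times, the disc can have large flat area while the ambient sphere has area $8\pi\ep$ as small as you like. This immersion trick is precisely what decouples the disc's area from the sphere's area, and hence decouples $R$ from $\ep$. The same construction with half-length $L\geq\sinh R$ produces a Ricci flow $\hat g(t)=\ee^{2\hat u}|\dz|^2$ on $\overline{\Disc_{\sinh R}}$ with $\hat g(0)=|\dz|^2$ and $\hat u\to-\infty$ uniformly by time $\ep$. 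Since $g_\Sigma\le|\dz|^2$, one then solves the initial--boundary value problem for \eqref{eq:ricci-flow-cf} on $\overline{\Disc_{\sinh R}}$ with initial data $u_\Sigma$ and boundary data $\hat u|_{\partial\Disc_{\sinh R}}$; the comparison principle gives $u\le\hat u$, whence $\Vol_{g(t)}(\Disc_{\sinh R})\le\int \ee^{2\hat u}\to 0$. This is the ``appropriate barrier argument'' the paper alludes to, and it is not optional: without the immersion step, your embedding scheme is blocked by the area inequality.
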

In other words, Ricci flow can suck out an arbitrarily large amount of area from a remote part of the manifold, in an arbitrarily short period of time. The Ricci flow of Theorem \ref {thm:sucked_out1} can be constructed based on  Theorem \ref{thm:sucked_out2} in the appendix via an appropriate barrier argument.

Despite this extreme behaviour, the results of this section show that a truncated cigar evolving as part of a larger \emph{complete} Ricci flow cannot suffer such a fate, and must evolve like a complete cigar for an arbitrarily long time.
\begin{defn}
  \label{defn:contained-cigar}
  A surface $\bigl(\Mf^2,g\bigr)$ contains at the point
  $p\in\Mf$ a cigar of length $R>0$ at scale $\alpha>0$
  if $\gBall_g(p;R)\Subset\Mf$ and there exists an isometric
  diffeomorphism $\psi$ from $\gBall_g(p;R)$ to the cigar at scale
  $\alpha$ (i.e. the rescaled cigar $\alpha^2g_\Sigma$)
  restricted to the disc
  $\Disc_{\sinh(\alpha^{-1}R)}\subset\mathbb C$:
  \[ \psi: \Bigl(\gBall_g(p;R),g\Bigr) \longrightarrow
  \Bigl(\Disc_{\sinh(\alpha^{-1}R)}, \alpha^2g_\Sigma\Bigr).\]
\end{defn}
The next lemma obtains curvature control at a point part of the way down an evolving truncated cigar, at least while the point remains well within the interior of the evolving manifold and the tip does not get too close.

\begin{lemma}
  \label{lemma:bdd-curv-at-R}
  For some universal constant $B>0$ and for all times $T>0$, radii
  $r_0\ge\sqrt{BT}$ and lengths $\tilde R\ge 4(r_0+1)$, if
  $\bigl(g(t)\bigr)_{t\in[0,T]}$ is Ricci flow on a noncompact Riemann
  surface $\Mf^2$ such that $\bigl(\Mf,g(0)\bigr)$ contains at the
  point $p\in\Mf$ a cigar of length $2\tilde R$ at unit scale, then we have
  \begin{equation}
    \label{eq:bdd-curv-at-R}
    \sup_{\partial\gBall_{g(0)}(p;\tilde R)} K[g(t)] \le 2r_0^{-2}
    \qquad\text{for all }t\in[0,\tau]
  \end{equation}
  where
  \begin{equation}
    \label{eq:defn-tau}
    \tau := \sup\left\{ t_0\in[0,T] :
      \begin{array}{l}
        \gBall_{g(t)}(q;r_0) \Subset
        \Mf\setminus\{p\} \\
        \text{for all } q\in\partial\gBall_{g(0)}(p;\tilde R) \text{ and }
        t\in[0,t_0]
      \end{array}\right\}>0.
  \end{equation}
\end{lemma}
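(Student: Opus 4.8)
The plan is to compare the evolving truncated cigar with the genuine cigar solution $g_\Sigma(t)$ from the inside, using Chen's version of Perelman pseudolocality (Theorem~\ref{thm:chen-pseudo}) as the main input, and then to upgrade the resulting curvature bound at the point $q\in\partial\gBall_{g(0)}(p;\tilde R)$ via the evolution equation and the maximum principle. First I would fix an arbitrary such $q$ and observe that, by hypothesis, within $\gBall_{g(0)}(p;\tilde R)$ the initial metric is \emph{exactly} the unit-scale cigar, so on the $g(0)$-geodesic ball $\gBall_{g(0)}(q;\rho_0)$ for some definite $\rho_0$ (comparable to $r_0$, chosen small enough that this ball stays inside the cigar part since $\tilde R\ge 4(r_0+1)$ and $q$ is at distance $\tilde R$ from the tip) the curvature of $g(0)$ is bounded by $2r_0^{-2}$ via the cigar curvature estimate \eqref{eq:cigar-curv-flat}, and the isoperimetric/volume lower bounds \eqref{eq:cigar-area-cyl-r}, \eqref{eq:cigar-metric-flat} hold: the cigar there is almost a flat cylinder, so balls are almost Euclidean. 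This verifies the hypotheses of pseudolocality at $q$ with an initial scale of order $r_0$.

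The key step is then to apply pseudolocality: by the definition of $\tau$ in \eqref{eq:defn-tau}, for $t\in[0,\tau]$ the ball $\gBall_{g(t)}(q;r_0)$ remains compactly contained in $\Mf\setminus\{p\}$, so the flow restricted to a neighbourhood of $q$ is a genuine (complete enough, or interior) Ricci flow to which Chen's pseudolocality applies, and we conclude $|K[g(t)]|(q)\le C r_0^{-2}$ for $t\in[0,cr_0^2]$ for universal $c,C$. Choosing the universal constant $B$ so that $T\le r_0^2/B$ forces $cr_0^2\ge T\ge\tau$, hence the bound holds on all of $[0,\tau]$. The constant in pseudolocality is not quite the clean $2r_0^{-2}$ demanded in \eqref{eq:bdd-curv-at-R}, so the final move is to absorb the constant: either by running pseudolocality at a slightly smaller scale $\lambda r_0$ and noting $\tilde R\ge 4(r_0+1)$ gives room, or — more robustly — by using the bound $|K|\le Cr_0^{-2}$ on the parabolic neighbourhood together with the evolution equation $\partial_t K=\Delta K+2K^2$ and a localized maximum principle (interior estimates à la Shi/Bernstein) on $\gBall_{g(t)}(q;r_0/2)$, where the initial data at $q$ is the \emph{exact} cigar value $\le 2/(\cosh\tilde R)^2$, which is far smaller than $r_0^{-2}$; propagating this sharp initial smallness forward for time $\le T\le r_0^2/B$ with $B$ large keeps $K[g(t)](q)\le 2r_0^{-2}$.

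The main obstacle I expect is the first one made rigorous: justifying that pseudolocality (a statement about complete Ricci flows, or flows on balls with controlled boundary behaviour) legitimately applies here even though $\bigl(g(t)\bigr)$ is only assumed to be a Ricci flow on a noncompact surface without global curvature bounds, and the "good" region is only the cigar patch. The role of the carefully chosen stopping time $\tau$ is exactly to quarantine the bad behaviour: as long as $\gBall_{g(t)}(q;r_0)\Subset\Mf\setminus\{p\}$, no information can travel in from the tip $p$ or from infinity within the relevant time, so Chen's interior pseudolocality estimate — which only needs the geometry on an initial ball plus the flow staying defined on a forward parabolic neighbourhood that doesn't hit the complement — is applicable. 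Making the dependence of $B$ on the universal pseudolocality constants explicit, and checking that the length condition $\tilde R\ge 4(r_0+1)$ really does keep the initial comparison ball of radius $\sim r_0$ strictly inside the cigar region (so that \eqref{eq:cigar-curv-flat}–\eqref{eq:cigar-metric-flat} give the required almost-Euclidean initial data), are the remaining technical points, but these are routine given the cigar estimates collected in \S\ref{ssec:cigar}.
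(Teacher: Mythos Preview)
Your plan has a genuine gap at exactly the point the paper flags as the main obstacle: the volume non-collapsing hypothesis~(iii) of Chen's pseudolocality. You write that near $q\in\partial\gBall_{g(0)}(p;\tilde R)$ ``the cigar there is almost a flat cylinder, so balls are almost Euclidean'' and cite \eqref{eq:cigar-area-cyl-r}, \eqref{eq:cigar-metric-flat}. But a flat cylinder is \emph{not} volume non-collapsed at scale $r_0$ once $r_0$ exceeds the circumference: on the unit-scale cigar, $\Vol_{g_\Sigma}\gBall_{g_\Sigma}(q;r_0)\sim 4\pi r_0$, which is $o(r_0^2)$, so condition~(iii) fails for any fixed $v_0>0$ as soon as $r_0$ is large. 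Since the lemma is stated for $r_0\ge\sqrt{BT}$ with $T$ arbitrary, $r_0$ must be allowed to be arbitrarily large, and your direct application of Theorem~\ref{thm:chen-pseudo} on $\Mf$ does not go through. The estimates you cite do not help: \eqref{eq:cigar-area-cyl-r} concerns balls centred at the \emph{tip} and exhibits exactly the linear growth that is the problem, while \eqref{eq:cigar-metric-flat} compares the metric to the flat cylinder metric, not to a Euclidean one.

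The paper's fix is to puncture at the tip $p$ and pass to the universal cover of $\Mf\setminus\{p\}$, where the lifted punctured cigar looks locally like a half-plane in coordinates $(\ell,\tilde\theta)\in\R^2$; balls of radius $r_0$ there genuinely have area $\ge\frac{3\pi}{4}r_0^2$, and pseudolocality applies with $v_0=\frac{3\pi}{4}$. This also explains why $\tau$ is phrased via $\gBall_{g(t)}(q;r_0)\Subset\Mf\setminus\{p\}$ rather than just $\Subset\Mf$: condition~(i) must hold on the \emph{cover}, and a ball meeting $p$ would unwrap to something non-precompact. Two smaller points: Chen's theorem as stated already outputs the bound $2r_0^{-2}$ (the constant $C(v_0)$ governs the time interval, not the curvature), so your ``upgrade'' step via the evolution equation is unnecessary; and Theorem~\ref{thm:chen-pseudo} requires no completeness, only $\gBall_{g(t)}(q;r_0)\Subset\Mf$, so your worry about justifying its applicability on an incomplete flow is not an issue once you have the right ball.
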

The lemma exploits Chen's two-dimensional version of Perelman's Pseudolocality
theorem (Theorem \ref{thm:chen-pseudo}) to gain arbitrary
control of the curvature ($r_0$ can be chosen as large as we want).
Since we
are about to apply that theorem on a region where
the cigar is almost a cylinder, the non-collapsed condition (iii)
would cause trouble for large $r_0$. Therefore we will puncture the
cigar at its tip and lift it to its universal cover (being locally a
half plane) such that the problem vanishes and we can establish the
curvature estimate for the lifted Ricci flow. The restriction to
$[0,\tau]$ in \eqref{eq:bdd-curv-at-R}
is basically the price to pay for that trick to still fulfil condition (i)
of that theorem on the universal cover of the punctured surface
$\Mf\setminus\{p\}$. However this turns out not to be a restriction
because the barriers we construct in the next Lemma
\ref{lemma:cigar-barriers} will ensure that for
sufficiently large $\tilde R$ we will have maximal $\tau=T$ if we additionally
require the Ricci flow $g(t)$ to be instantaneously complete.

\begin{proof}
  For $v_0=\frac{3\pi}4$ we obtain a universal $B>0$ as the constant
  $C=C(v_0)>0$ in Theorem \ref{thm:chen-pseudo}. Fix $T>0$,
  $r_0\ge\sqrt{BT}$ and $\tilde R\ge 4(r_0+1)$. Let
  $\bigl(g(t)\bigr)_{t\in[0,T]}$ be a Ricci flow such that
  $\bigl(\Mf,g(0)\bigr)$ contains at the point $p\in\Mf$
  a cigar of length $2\tilde R$ at unit scale.

  By virtue of the uniformisation theorem, since $\Mf$ is noncompact its
  universal cover is conformally the plane $\mathbb C$ or the disc
  $\Disc$. Because it suffices to establish the
  curvature estimate \eqref{eq:bdd-curv-at-R} on any sheet of its universal
  cover, we can assume without loss of generality that $\Mf$ is
  conformal to $\mathbb C$ or $\Disc$ and
  in particular, punctured at the tip of the cigar,
  $\Mf_p:=\Mf\setminus\{p\}$ is homeomorphic to a cylinder.

  Using (possibly a second time) the universal covering
  $\pi:\widetilde{\Mf_p}\to\Mf_p$ to lift the Ricci flow $\tilde
  g(t):=\pi^*g(t)$ for all $t\in[0,T]$, we can write the lifted
  punctured cigar $\tilde g(0)$ on
  $\pi^{-1}\bigl(\gBall_{g(0)}(p;2\tilde R)\bigr)$ locally in `lifted'
  cylindrical coordinates $(\ell,\tilde\theta)\in(-\infty,\log\sinh
  2\tilde R)\times\mathbb R$  as
  \begin{equation*}
    \label{eq:lifted-cyl-coordinates}
    \tilde g(0;\ell,\theta) = \frac{\dell^2+\dttheta^2}{\ee^{-2\ell}+1}.
  \end{equation*}
  Now fix any point
  $q\in\pi^{-1}\left(\partial\gBall_{g(0)}\bigl(p;\tilde R\bigr)\right)$.
  We want to apply Theorem \ref{thm:chen-pseudo} to $\bigl(\tilde
  g(t)\big)_{t\in[0,\tau)}$ on $\gBall_{\tilde g(t)}(q;r_0)$.
  By definition of $\tau$, we have
  $\gBall_{\tilde g(t)}(q;r_0) \Subset
  \widetilde{\Mf_p}$ for all $t\in[0,\tau)$,
  and therefore condition (i) of that theorem is fulfilled. As
  $\tilde R\ge 4(r_0+1)$ we certainly have $\gBall_{\tilde
    g(0)}(q;r_0)\subset\pi^{-1}\left(
    \gBall_{g(0)}(p;2\tilde R)\setminus\gBall_{g(0)}(p;\nicefrac{\tilde R}2)
  \right)\simeq\bigl(\log\sinh\nicefrac{\tilde R}2,\log\sinh 2\tilde R)\times\mathbb R$  by
  equation \eqref{eq:cigar-dist-cyl}, and
  condition (ii) follows using estimate \eqref{eq:cigar-curv-flat}
  \[ \sup_{\gBall_{\tilde g(0)}(q;r_0)} \Bigl|K[\tilde g(0)]\Bigr|
  \le \sup_{(\log\sinh\nicefrac{\tilde R}2,\infty) \times\rSph^1} K[g_\Sigma]
  \le 8 \tilde R^{-2} \le r_0^{-2}. \]
  Similarly, transferring \eqref{eq:cigar-metric-flat} to this case i.e.
  \[  \frac34 \bigl(\dell^2+\dttheta^2\bigr)
  \le\tilde g(0)\le \dell^2+\dttheta^2 \qquad\text{on }
  \gBall_{\tilde g(0)}(q;r_0) \]
  we have $\gBall_{\dell^2+\dttheta^2}(q;r_0)\subset\gBall_{\tilde
    g(0)}(q;r_0)$ and condition (iii):
  \[ \Vol_{\tilde g(0)}\gBall_{\tilde g(0)}(q;r_0) \ge
  \frac34 \Vol_{\dell^2+\dttheta^2} \gBall_{\dell^2+\dttheta^2}(q;r_0)
  = \frac{3\pi}4 r_0^2. \]
  Applying Theorem \ref{thm:chen-pseudo} for every
  $q\in\pi^{-1}\left(\partial\gBall_{g(0)}(p;\tilde R)\right)$ yields the
  desired curvature estimate for $\bigl(\tilde g(t)\bigr)_{t\in[0,\tau)}$ on
  $\pi^{-1}\left(\partial\gBall_{g(0)}(p;\tilde R)\right)$, and therefore
  also for $\bigl(g(t)\bigr)_{t\in[0,\tau]}$ on
  $\partial\gBall_{g(0)}(p;\tilde R)$ by continuity.
\end{proof}

The curvature estimate of Lemma \ref{lemma:bdd-curv-at-R} can be
transformed into a constraint on the conformal factor of $g(t)$
on $\partial \gBall_{g(0)}(p;\tilde R)$
which will allow us to apply a maximum principle to
establish local barriers (see \eqref{eq:cigar-barriers}) for
$t\in [0,\tau]$. Once we have the
lower barrier we can utilise it to show that the Ricci flow does not
shrink the tip of the cigar too fast; hence $\tau$ in Lemma
\ref{lemma:bdd-curv-at-R} will have to be maximal ($\tau=T$) and we
will have upper and lower barriers for all $t\in [0,T]$.

\begin{lemma}
  \label{lemma:cigar-barriers}
  For some universal constants $\beta>1,A>0$ and
  for all times $T>0$ and lengths $\tilde R\ge A(T+1)$,
  if $\bigl(g(t)\bigr)_{t\in[0,T]}$ is
  an instantaneously complete Ricci flow on a noncompact Riemann
  surface $\Mf^2$ such that $\bigl(\Mf,g(0)\bigr)$ contains at the
  point $p\in\Mf$ a cigar of length $2\tilde R$ at unit scale, then
  there exist cigar solutions $\bigl(g_\pm(t)\bigr)_{t\in\mathbb R}$
  at scales $\beta^{\pm1}$
  which are locally upper and lower barriers for
  $g(t)$ in the sense that on $\gBall_{g(0)}(p;\tilde R)$
  \begin{equation}
    \label{eq:cigar-barriers}
    g_-(t):= \psi^*\Bigl(\beta^{-2}g_\Sigma\bigl(\beta^2t\bigr)\Bigr) \le
    g(t) \le \psi^*\Bigl(\beta^2
    g_\Sigma\bigl(\beta^{-2}(t-T)\bigr)\Bigr)=:g_+(t)
  \end{equation}
  for all $t\in[0,T]$ with $\psi$ as in
  Definition \ref{defn:contained-cigar}.
\end{lemma}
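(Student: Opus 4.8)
The plan is to work entirely on the cigar region $\gBall_{g(0)}(p;\tilde R)$, pulled back via $\psi$ to the disc $\Disc_{\sinh\tilde R}\subset\C$, and to compare the scalar conformal factor $u$ of $g(t)$ with the conformal factors $u_\pm$ of rescaled cigar solutions using the parabolic maximum principle on this (bounded) domain. Since \eqref{eq:ricci-flow-cf} reads $\pddt u = \ee^{-2u}\Delta u$ and each $g_\Sigma(t)$ solves the same equation, the difference $w:=u-u_\pm$ satisfies a linear parabolic inequality with no zeroth-order term of bad sign, so it suffices to control $w$ at the parabolic boundary: at $t=0$ on all of $\Disc_{\sinh\tilde R}$, and on $\partial\Disc_{\sinh\tilde R}$ (i.e. $\partial\gBall_{g(0)}(p;\tilde R)$) for all $t\in[0,T]$. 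The scale factors $\beta^{\pm1}$ and the time shifts ($\beta^2 t$ for the lower barrier, $\beta^{-2}(t-T)$ for the upper) are chosen precisely so that these boundary comparisons hold.

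First I would fix $\beta>1$ a universal constant (to be pinned down) and record that at $t=0$ the barriers are rescaled static cigars $\beta^{\pm2}g_\Sigma$ compared against $g_\Sigma$ itself on the disc; a short computation with \eqref{eq:eucl-cigar} shows $\beta^{-2}g_\Sigma(0)\le g_\Sigma\le\beta^2 g_\Sigma(-T/\beta^2)$ on $\Disc_{\sinh\tilde R}$ — the first inequality is clear and the second holds because $g_\Sigma(\cdot;z)$ is increasing as $t\downarrow$ and $\beta^2\ge1$. So the initial condition at the parabolic boundary is satisfied for free. Next I would handle the spatial boundary $\partial\gBall_{g(0)}(p;\tilde R)$: this is where Lemma~\ref{lemma:bdd-curv-at-R} enters. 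Choosing $A$ large enough that $\tilde R\ge A(T+1)$ forces $r_0:=\tfrac14\tilde R-1\ge\sqrt{BT}$ and $2\tilde R\ge 2\cdot 4(r_0+1)$, Lemma~\ref{lemma:bdd-curv-at-R} gives $K[g(t)]\le 2r_0^{-2}$ on that boundary circle for $t\in[0,\tau]$. Combined with Chen's lower bound $K\ge -\frac1{2t}$ (or, better, combined with the lower barrier once it is established), the curvature — hence, after integrating the evolution equation $\pddt u=-K$ in time, the conformal factor $u$ itself — is pinned within a fixed band of its initial cigar value on $\partial\gBall_{g(0)}(p;\tilde R)$, and for $\beta$ sufficiently large this band is absorbed into the gap between the cigar barriers $u_\pm$ at that location. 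This yields the barrier inequality \eqref{eq:cigar-barriers} on $[0,\tau]$.

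The remaining, and genuinely delicate, step is the bootstrap that upgrades $\tau$ to $T$. By definition $\tau$ is the largest time up to which $\gBall_{g(t)}(q;r_0)\Subset\Mf\setminus\{p\}$ for every $q\in\partial\gBall_{g(0)}(p;\tilde R)$; this can only fail either because such a ball runs into $p$ (the tip comes too close) or because it escapes to the "end" of $\Mf$ — but the latter is excluded by instantaneous completeness, which is exactly why that hypothesis is imposed. So the only danger is the tip $p$ rushing outward. Here the \emph{lower} barrier $g_-(t)=\psi^*(\beta^{-2}g_\Sigma(\beta^2 t))$, which we have just shown holds on $[0,\tau]$, does the work: distances under $g(t)$ dominate distances under $g_-(t)$, and by \eqref{eq:cigar-dist-t} the $g_-(t)$-distance from $p$ out to $\partial\gBall_{g(0)}(p;\tilde R)$ stays at least of order $\beta^{-1}(\tilde R-2\beta^2 t)$, which for $\tilde R\ge A(T+1)$ and $A$ large remains well above $r_0$ for all $t\in[0,T]$. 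Hence along $[0,\tau]$ the tip never approaches within $r_0$ of $\partial\gBall_{g(0)}(p;\tilde R)$, so the constraint defining $\tau$ is not tight at $t=\tau$; combined with completeness this shows $\tau$ cannot be less than $T$, i.e. $\tau=T$. Feeding $\tau=T$ back into the previous paragraph gives the barriers on the full interval $[0,T]$, completing the proof.

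I expect the main obstacle to be making the logical order of this bootstrap airtight: the curvature bound of Lemma~\ref{lemma:bdd-curv-at-R} is only available on $[0,\tau]$, the barriers are derived from it on $[0,\tau]$, and the non-escape of the tip — needed to conclude $\tau=T$ — is derived from the \emph{lower} barrier, so one must check there is no circularity, e.g. by running a continuity/open-closed argument on the set of times where \eqref{eq:cigar-barriers} holds, or by observing that $\tau$ is defined independently and that on $[0,\tau]$ everything is consistent, then arguing $\tau<T$ leads to a contradiction with completeness. A secondary technical point is quantifying "the band of $u$-values on the boundary fits inside the $u_\pm$ gap" uniformly — this is where the universal constant $\beta$ gets chosen, and one has to verify that the integrated curvature oscillation $\int_0^T K\,dt$ on the boundary circle, bounded by $2r_0^{-2}T\le 2T/(BT)=2/B$ above and by $\int_0^T\frac{dt}{2t}$... which diverges, so one must instead use the lower barrier (not Chen) for the lower control on $u$ there, again reinforcing that the lower barrier must be in hand first. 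Handling this interplay cleanly, rather than any single estimate, is the crux.
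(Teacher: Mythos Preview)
Your overall architecture---maximum principle on $\Disc_{\sinh\tilde R}$, boundary control via Lemma~\ref{lemma:bdd-curv-at-R}, then bootstrap $\tau=T$ using the lower barrier and instantaneous completeness---is exactly the paper's approach, and your bootstrap argument is correct and matches the paper's contradiction argument. There is, however, one genuine gap.

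The problem is your control of $u$ on the spatial boundary $\partial\Disc_{\sinh\tilde R}$ for the \emph{upper} barrier. To get $u(t)\le u_+(t)$ there you need an upper bound on $u(t)-u(0)=-\int_0^t K\,ds$, i.e.\ a \emph{lower} bound on $K$ that integrates over $[0,T]$. You try two routes and correctly note that Chen's bound $K\ge -\tfrac1{2t}$ fails (it integrates to $-\infty$). Your fallback---``use the lower barrier for the lower control on $u$''---does not work either: the lower barrier gives $u\ge u_-$, which is lower control, not the upper control you need; and no comparison of conformal factors yields a pointwise lower bound on curvature. So as written, the upper barrier is not established.

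The resolution in the paper is simpler than either of your attempts: Lemma~\ref{lemma:bdd-curv-at-R} is proved by invoking Chen's pseudolocality (Theorem~\ref{thm:chen-pseudo}), whose conclusion is a \emph{two-sided} bound $|K[g(t)]|\le 2r_0^{-2}$, not merely the one-sided bound recorded in the lemma's statement. With this, one takes $r_0=\sqrt{BT}$ (rather than your $r_0=\tfrac14\tilde R-1$), so that $|K|\le 2/(BT)$ on the boundary and hence $|u(t)-u(0)|\le \tfrac{2}{B}=:\log\beta$ for all $t\in[0,\tau]$. This single estimate handles both barriers at once, and fixes $\beta=\ee^{2/B}$ as a universal constant. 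Once you make this correction your argument goes through; in particular your worry about circularity disappears, since neither barrier depends on the other.
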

\begin{proof}
  Let $B$ be the universal constant from Lemma
  \ref{lemma:bdd-curv-at-R}, fix $T>0$, and define $r_0=\sqrt{BT}$ and
  $\beta=\ee^{\frac2B}>1$. Then we can find a universal $A>0$ such
  that $A(T+1)\ge\max\bigl\{ \beta(2\beta T+r_0+1),
  4(r_0+1)\bigr\}$ and choose an arbitrary $\tilde R\ge A(T+1)$. Note
  that this way $\tilde R$ fulfils the requirement of Lemma
  \ref{lemma:bdd-curv-at-R}.
  Finally define $\tau\in[0,T]$ according to \eqref{eq:defn-tau}.

  Without loss of generality we can write $\psi_*
  g(t)\bigr|_{\gBall_{g(0)}(p;\tilde R)}=\ee^{2u(t)}|\dz|^2$ on
  $\overline{\Disc_{\sinh \tilde R}}$. Using \eqref{eq:cigar-rf}
  we obtain the conformal factors of the proposed barriers
  $\psi_*\bigl(g_\pm(t)\bigr)$ of
  \eqref{eq:cigar-barriers}
  \[   u_-(t,z) = u_\Sigma\bigl(\beta^2t,z\bigr)-\log\beta
  \qquad\text{and}\qquad
  u_+(t,z) = u_\Sigma\bigl(\beta^{-2}(t-T),z\bigr)+\log\beta. \]
  To establish \eqref{eq:cigar-barriers} we are going to exploit the
  standard parabolic maximum principle for solutions of
  \eqref{eq:ricci-flow-cf} on $[0,\tau]\times\overline{\Disc_{\sinh \tilde R}}$:
  Since by assumption $u(0,z) = u_\Sigma(0,z)$ for all $z\in\Disc_{\sinh \tilde R}$,
  we have initially
  \[ u_-(0,z) = u_\Sigma(0,z) -\log\beta \le  u(0,z) \le
  u_\Sigma(-\beta^{-2}T,z) + \log\beta = u_+(0,z) \]
  for all $z\in\overline{\Disc_{\sinh \tilde R}}$.
  For the requirement on the boundary $[0,\tau]\times\partial\Disc_{\sinh
    \tilde R}$, observe that
  estimating (with Lemma \ref{lemma:bdd-curv-at-R}) and integrating
  the evolution equation
  \eqref{eq:ricci-flow-cf} for $u(t)$ yields for all $t\in[0,\tau]$
  \[ \left| \pddt u(t)\right| = \Bigl|K[g(t)]\Bigr| \le \frac2{BT} \le
  \frac2{B\tau}
  \quad
  \text{ and thus }
  \quad \Bigl|u(t) - u(0)\Bigr|  \le \frac2B = \log\beta
  \quad\text{on }\partial\Disc_{\sinh \tilde R}. \]
  Hence, we have for all $t\in[0,\tau]$ and $z\in\partial\Disc_{\sinh \tilde R}$
  \begin{align*}
    u(t,z) &\le u(0,z) + \log\beta
    \le u_\Sigma\bigl(\beta^{-2}(t-T),z\bigr) + \log\beta
    = u_+(t,z) \\
    \text{and}\qquad u(t,z) &\ge u(0,z) - \log\beta \ge
    u_\Sigma\bigl(\beta^2t,z\bigr) - \log\beta
    = u_-(t,z).
  \end{align*}
  That is sufficient to apply the maximum principle and conclude
  $$u_-(t,z)\le u(t,z)\le u_+(t,z),$$
  on $[0,\tau]\times \Disc_{\sinh \tilde R}$, which proves
  \eqref{eq:cigar-barriers} for all $t\in[0,\tau]$ after pulling back
  the metrics via $\psi$.

  Now we will use the lower barrier to show that in fact, with our
  choice of $\tilde R\ge A(T+1)$ we have maximal $\tau=T$:
  Since $\bigl(g(t)\bigr)_{t\in[0,T]}$ is instantaneously complete we
  may rephrase the definition of $\tau$ in \eqref{eq:defn-tau} to
  \begin{equation}
    \label{eq:defn-tau-ic}\tag{\ref{eq:defn-tau}'}
    \tau = \max\Bigl\{ t_0\in[0,T] :
    \dist_{g(t)}\Bigl(p,\partial\gBall_{g(0)}\bigl(p;\tilde R\bigr)\Bigr)\ge r_0
    \text{ for all } t\in[0,t_0]
    \Bigr\}.
  \end{equation}
  Now assume that $\tau<T$.
  With our choice of $\tilde R\ge\beta(2\beta T +r_0+1)$ we have
  \begin{align*}
    \dist_{g(t)}\bigl(p,\partial\gBall_{g(0)}(p;\tilde R)\bigr)
    &\ge \dist_{g_-(t)}\bigl(p,\partial\gBall_{g(0)}(p;\tilde R)\bigr)\\
    &= \dist_{\beta^{-2}g_\Sigma(\beta^2t)}\bigl( 0,
    \partial\gBall_{g_\Sigma}(0;\tilde R)\bigr)\\
    &\ge \beta^{-1}\left(\tilde R-2\beta^2 t\right) & \text{using
      \eqref{eq:cigar-dist-t}}    \\
    &\ge \beta^{-1}\tilde R - 2\beta  T \ge r_0 + 1
  \end{align*}
  for all $t\in[0,\tau]$, which contradicts the maximality
  of $\tau$ in \eqref{eq:defn-tau-ic} since $t\mapsto g(t)$ is
  continuous. Hence $\tau = T$.
\end{proof}

\begin{prop}
  \label{prop:lower-bd-curv-1}
  For some universal $\varepsilon>0$ and $A>0$, and for all times $T>0$ and
  lengths $R\ge A(T+1)$,
  if $\bigl(g(t)\bigr)_{t\in[0,T]}$ is an instantaneously
  complete Ricci flow on a noncompact Riemann surface $\Mf^2$ such that
  $\bigl(\Mf,g(0)\bigr)$ contains a cigar of length $R$ at unit scale,
  then
  \begin{equation}
    \label{eq:lower-bd-curv-1}
    \sup_\Mf K[g(t)] \ge \varepsilon\qquad\text{for all } t\in[0,T].
  \end{equation}
\end{prop}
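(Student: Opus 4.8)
The plan is to use the barriers from Lemma~\ref{lemma:cigar-barriers} together with Bol's isoperimetric inequality (Theorem~\ref{thm:bol-isop-ineq}) to force the curvature to be bounded below by a positive constant somewhere inside the truncated cigar region. Given $T>0$ and $R\ge A(T+1)$ (with $A$ enlarged as needed), set $\tilde R=R/2$ so that $g(0)$ contains a cigar of length $2\tilde R=R$ at the point $p$ and Lemma~\ref{lemma:cigar-barriers} applies. The lemma then gives, on $\gBall_{g(0)}(p;\tilde R)$, cigar barriers $g_-(t)\le g(t)\le g_+(t)$ at scales $\beta^{\mp1}$, where $g_-(t)=\psi^*(\beta^{-2}g_\Sigma(\beta^2 t))$ and $g_+(t)=\psi^*(\beta^2 g_\Sigma(\beta^{-2}(t-T)))$. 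The key observation is that both barriers are \emph{genuine cigars}, so one has explicit control of their areas and of their boundary lengths on the coordinate disc $\Disc_{\sinh\tilde R}$.

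The main step is an isoperimetric deficit argument. Consider the region $\Om:=\psi^{-1}(\Disc_\rho)$ for a suitable fixed $\rho$ (e.g. $\rho=\sinh(\tilde R/2)$, deep inside the cigar but away from its tip). For the lower barrier $g_-(t)$, which is a translated rescaled cigar, the volume $\Vol_{g_-(t)}\Om$ stays bounded \emph{below} by a positive constant independent of $T$ (for $t\in[0,T]$), because in cylindrical coordinates the cigar only translates at unit speed after rescaling and $\tilde R$ is chosen large enough that the relevant cylindrical segment is never fully evacuated --- this is exactly what estimates \eqref{eq:cigar-dist-t}--\eqref{eq:cigar-area-t} quantify. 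On the other hand, the length of $\partial\Om$ measured in $g(t)\le g_+(t)$ is bounded \emph{above} by a constant, since $g_+(t)\le\psi^*(\beta^2 g_\Sigma(\text{something}\le0))$ and the cigar metric is dominated by the flat cylinder metric (see \eqref{eq:cyl-cigar}), so $L_{g(t)}(\partial\Om)\le\beta\,L_{\text{flat}}(\partial\Disc_\rho)=2\pi\beta$ or similar. Combining $\Vol_{g(t)}\Om\ge\Vol_{g_-(t)}\Om\ge c_0>0$ with $L_{g(t)}(\partial\Om)\le C_0$, Bol's inequality --- which for a simply connected domain reads $L^2\ge 4\pi \Vol - \bigl(\sup_\Om K_+\bigr)\Vol^2$ or its equivalent --- forces $\sup_\Om K[g(t)]$ (hence $\sup_\Mf K[g(t)]$) to be at least some universal $\varepsilon>0$ depending only on $c_0$ and $C_0$, which are themselves universal. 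One must check $\Om$ is simply connected (it is a coordinate disc via $\psi$) so Bol's inequality is applicable in the required form.

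The hard part will be threading the quantitative estimates so that the resulting $\varepsilon$ and $A$ are genuinely \emph{universal}, i.e. independent of $T$. The delicate point is that $\beta=\ee^{2/B}$ is already universal (from Lemma~\ref{lemma:cigar-barriers}), and the lower bound $c_0$ on $\Vol_{g_-(t)}\Om$ must be extracted from \eqref{eq:cigar-area-t} in a way that survives letting $T\to\infty$ --- this is why the length $R$ must scale linearly in $T$, so that even after the tip of the lower-barrier cigar has translated a distance $\sim 2\beta^2 T$, there remains a cylindrical segment of bounded but positive length inside $\Disc_\rho$ contributing area $\ge c_0$. Equivalently one chooses $\rho$ (or the region $\Om$) depending on $T$ so that its $g_-(t)$-area is pinned between two positive universal constants for all $t\in[0,T]$, while its flat boundary length remains universally bounded; this forces $A$ to absorb the linear-in-$T$ growth. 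Once the two-sided control of $(\Vol_{g(t)}\Om, L_{g(t)}(\partial\Om))$ is in place with universal constants, the conclusion \eqref{eq:lower-bd-curv-1} is immediate from Bol.
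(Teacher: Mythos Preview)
Your overall strategy --- combine the cigar barriers of Lemma~\ref{lemma:cigar-barriers} with Bol's isoperimetric inequality --- is exactly what the paper does. The gap is in the choice of the domain $\Om$. You propose a coordinate disc $\psi^{-1}(\Disc_\rho)$ with $\rho$ fixed (or depending only on $T$), and then aim for two-sided control of $\Vol_{g(t)}\Om$. But the barriers only give one-sided volume control in each direction: $g_-(t)\le g(t)$ bounds $\Vol_{g(t)}\Om$ \emph{from below}, while an upper bound would have to come from $g_+(t)=\psi^*\bigl(\beta^2 g_\Sigma(\beta^{-2}(t-T))\bigr)$. For $t$ near $0$ this is the cigar at a very negative time $\sim -\beta^{-2}T$, and the $g_+$-area of any fixed coordinate disc then grows linearly in $T$ (cf.\ \eqref{eq:cigar-area-t}). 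So for a $\rho$ independent of $t$ you cannot pin $\Vol_{g(t)}\Om$ between universal constants across all $t\in[0,T]$; and without an upper volume bound, Bol's inequality $\sup_\Om K \ge \frac{4\pi}{\Vol}-\frac{L^2}{\Vol^2}$ degenerates to a nonpositive or vanishing lower bound as $\Vol\to\infty$.

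The paper's fix is to let the radius depend on $t$: define $\rho(t)$ to be the largest radius with $\Vol_{g(t)}\gBall_{g(0)}(p;\rho(t))=2\pi\beta^2$. This \emph{fixes} the volume exactly, removing the need for any upper volume estimate from $g_+$. The lower barrier $g_-$ is then used only to check that $\rho(t)\le R/2$, so that the domain stays inside the region where the barriers are valid. The upper barrier $g_+$ is used only to bound the boundary length: since any cigar metric is dominated by the flat cylinder (\eqref{eq:cyl-cigar}), one gets $\mathrm L_{g(t)}\partial\gBall_{g(0)}(p;\rho(t))\le 2\pi\beta$ regardless of where $\rho(t)$ lands and regardless of $t$. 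Plugging $\Vol=2\pi\beta^2$ and $L\le 2\pi\beta$ into Bol gives $\sup K\ge \beta^{-2}=:\varepsilon$ directly. Once you make this single adjustment --- choose $\rho(t)$ to normalise $g(t)$-volume rather than trying to control the volume of a fixed disc --- your argument goes through and matches the paper's.
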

\begin{proof}
  Let $\beta>1$ be the universal scaling for the barriers $g_\pm(t)$
  from Lemma \ref{lemma:cigar-barriers},
  and let $\tilde A>0$ be the $A$ from that same lemma.
  Fix a larger $A\ge2\tilde A$ such that $A(T+1)\ge 2\beta^2(2T+\beta^2+1)$ for
  all $T>0$. Now fix $T>0$ and pick
  $R\ge A(T+1)$. For improved legibility we are going to abbreviate $B_r :=
  \gBall_{g(0)}(p;r)$ for any $r>0$.
  To establish \eqref{eq:lower-bd-curv-1} we want to apply an
  isoperimetric inequality due to Bol (Theorem
  \ref{thm:bol-isop-ineq}) to the domain $B_{\rho(t)}$ of area
  $2\pi\beta^2$ around the tip of the cigar where
  \[ \rho(t) := \max\Bigl\{ r>0 : \Vol_{g(t)} B_r \le
  2\pi\beta^2\Bigr\}. \]
  To see that $\rho(t)\le\frac R2$ for all $t\in[0,T]$, i.e. the domain
  $B_{\rho(t)}$ stays in the region $B_{\frac R2}$ where
  Lemma \ref{lemma:cigar-barriers} provides
  the barriers $g_\pm(t)$, we can use the lower barrier $g_-(t)$ to
  estimate the area with respect to $g(t)$ from below:
  Keeping in mind $\frac R{2\beta^2}\ge 2T+\beta^2+1$ we estimate
  \begin{align*}
    \Vol_{g(t)} B_{\frac R2} &\ge \Vol_{g_-(t)} B_{\frac R2}
    = \Vol_{\beta^{-2}g_\Sigma(\be^2t)} \gBall_{g_\Sigma}(0;\nicefrac R2)\\
    &\ge 2\pi \beta^{-2}\left(\nicefrac R2-2\beta^2T-\log2 \right)
    & \text{using \eqref{eq:cigar-area-t}}\\
    &\ge 2\pi\beta^2
  \end{align*}
  for all $t\in[0,T]$.
  Meanwhile, the upper barrier allows us to estimate the length of the
  boundary $\partial B_{\rho(t)}$ independently of $t\in[0,T]$ by the
  circumference of the scaled cylinder
  \[ \mathrm L_{g(t)}\,\partial B_{\rho(t)}
  \le \mathrm L_{g_+(t)}\,\partial B_{\rho(t)}
  \le \mathrm L_{\beta^2g_\Sigma} \partial\gBall_{g_\Sigma}\bigl(0;\rho(t)\bigr)
  \le 2\pi\beta \]
  using \eqref{eq:cigar-metric-flat}.
  Now we can apply Bol's isoperimetric inequality, Theorem
  \ref{thm:bol-isop-ineq}, to conclude
  \begin{align*}
    \sup_\Mf K[g(t)] &\ge \sup_{B_{\rho(t)}} K[g(t)] \\
    &\ge \frac{4\pi}{\Vol_{g(t)} B_{\rho(t)}}
    - \frac{\left(\mathrm L_{g(t)}\,\partial B_{\rho(t)}\right)^2}{
      \left(\Vol_{g(t)}B_{\rho(t)}\right)^2} \\
    &\ge \frac{4\pi}{2\pi\beta^2} -
    \frac{(2\pi\beta)^2}{(2\pi\beta^2)^2}
    = \beta^{-2} =: \varepsilon.
  \end{align*}
\end{proof}

\begin{cor} 
  \label{cor:lower-bd-curv-a}
  For some universal $\varepsilon>0$ and $A>0$, and for all times $T>0$, scales
  $\alpha\in(0,1)$ and lengths $R\ge A\alpha^{-1}(T+1)$, if
  $\bigl(g(t)\bigr)_{t\in[0,T]}$ is an instantaneously
  complete Ricci flow on a noncompact Riemann surface $\Mf^2$ such that
  $\bigl(\Mf,g(0)\big)$ contains a cigar of length $R$ at scale
  $\alpha$, then
  \begin{equation}
    \label{eq:lower-bd-curv-a}
    \sup_\Mf K[g(t)] \ge \varepsilon\alpha^{-2} \qquad\text{for all }
    t\in[0,T].
  \end{equation}
\end{cor}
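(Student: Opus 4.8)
The plan is to deduce Corollary~\ref{cor:lower-bd-curv-a} from Proposition~\ref{prop:lower-bd-curv-1} by the standard parabolic rescaling of Ricci flow. First I would recall the scaling invariance: if $\bigl(g(t)\bigr)_{t\in[0,T]}$ is a Ricci flow, then for any $\lambda>0$ the rescaled family $\hat g(s):=\lambda^{-2}g(\lambda^2 s)$, $s\in[0,\lambda^{-2}T]$, is again a Ricci flow, and under this rescaling lengths scale by $\lambda^{-1}$, areas by $\lambda^{-2}$, and Gaussian curvature by $\lambda^{2}$, i.e. $K[\hat g(s)]=\lambda^2 K[g(\lambda^2 s)]$. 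Moreover instantaneous completeness is preserved, and being a noncompact Riemann surface is unaffected. The idea is to choose $\lambda=\alpha$ so that a cigar at scale $\alpha$ becomes a cigar at unit scale for the rescaled flow.

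Concretely, let $\varepsilon>0$ and $\tilde A>0$ be the universal constants furnished by Proposition~\ref{prop:lower-bd-curv-1}, and set $A:=\tilde A$. Fix $T>0$, $\alpha\in(0,1)$ and $R\ge A\alpha^{-1}(T+1)$, and let $\bigl(g(t)\bigr)_{t\in[0,T]}$ be an instantaneously complete Ricci flow on a noncompact Riemann surface $\Mf^2$ containing at some point $p$ a cigar of length $R$ at scale $\alpha$; by Definition~\ref{defn:contained-cigar} this means $\gBall_{g(0)}(p;R)\Subset\Mf$ with an isometry $\psi$ onto $\bigl(\Disc_{\sinh(\alpha^{-1}R)},\alpha^2 g_\Sigma\bigr)$. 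Now set $\hat g(s):=\alpha^{-2}g(\alpha^2 s)$ for $s\in[0,\hat T]$ with $\hat T:=\alpha^{-2}T$. Then $\hat g(s)$ is an instantaneously complete Ricci flow on the same noncompact Riemann surface, and since $\alpha^{-2}(\alpha^2 g_\Sigma)=g_\Sigma$, the metric $\hat g(0)=\alpha^{-2}g(0)$ contains at $p$ a cigar of length $\alpha^{-1}R$ at \emph{unit} scale (the geodesic ball $\gBall_{\hat g(0)}(p;\alpha^{-1}R)=\gBall_{g(0)}(p;R)$ is isometric via $\psi$ to $\bigl(\Disc_{\sinh(\alpha^{-1}R)},g_\Sigma\bigr)$).

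It remains to check the hypothesis of Proposition~\ref{prop:lower-bd-curv-1} for the rescaled flow, namely that the cigar length $\hat R:=\alpha^{-1}R$ satisfies $\hat R\ge A(\hat T+1)$. Since $\alpha\in(0,1)$ we have $\hat T+1=\alpha^{-2}T+1\le\alpha^{-1}(T+1)$, and therefore $\hat R=\alpha^{-1}R\ge\alpha^{-1}A\alpha^{-1}(T+1)\ge A\alpha^{-1}(T+1)\ge A(\hat T+1)$, as required. Applying Proposition~\ref{prop:lower-bd-curv-1} to $\bigl(\hat g(s)\bigr)_{s\in[0,\hat T]}$ gives $\sup_\Mf K[\hat g(s)]\ge\varepsilon$ for all $s\in[0,\hat T]$. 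Translating back via $K[\hat g(s)]=\alpha^2 K[g(\alpha^2 s)]$ and letting $t=\alpha^2 s$ range over $[0,T]$ yields $\sup_\Mf K[g(t)]\ge\varepsilon\alpha^{-2}$ for all $t\in[0,T]$, which is \eqref{eq:lower-bd-curv-a}. I do not anticipate a genuine obstacle here; the only point requiring a moment's care is the bookkeeping with the constant $A$ and the inequality $\alpha^{-2}T+1\le\alpha^{-1}(T+1)$, which uses $\alpha<1$ in an essential way — it is exactly why the hypothesis is stated with the factor $\alpha^{-1}$ rather than $\alpha^{-2}$.
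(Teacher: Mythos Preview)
Your approach is exactly the paper's: parabolic rescaling by $\alpha$ reduces the statement to Proposition~\ref{prop:lower-bd-curv-1}. However, there is an algebraic slip in your verification of the length hypothesis. You claim that $\alpha\in(0,1)$ gives $\hat T+1=\alpha^{-2}T+1\le\alpha^{-1}(T+1)$, but this is false in general: multiplying through by $\alpha^2$ it is equivalent to $T(1-\alpha)\le\alpha(1-\alpha)$, i.e.\ $T\le\alpha$. Consequently the last step of your chain $A\alpha^{-1}(T+1)\ge A(\hat T+1)$ does not hold.

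The fix is immediate and does not require changing $A$: simply skip the offending intermediate step and write
\[
\hat R=\alpha^{-1}R\ge \alpha^{-1}\cdot A\alpha^{-1}(T+1)=A\alpha^{-2}(T+1)\ge A\bigl(\alpha^{-2}T+1\bigr)=A(\hat T+1),
\]
using only $\alpha^{-2}\ge 1$. This is essentially what the paper does (in the equivalent form $A\alpha^{-1}(T+1)\ge A\alpha(\alpha^{-2}T+1)$, i.e.\ $\alpha^{-1}\ge\alpha$), and with this correction your argument is complete.
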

\begin{proof}
  Fix $T>0$ and $\alpha\in (0,1)$. From the preceding Proposition
  \ref{prop:lower-bd-curv-1} we obtain the universal constants
  $\varepsilon>0$ and $A>0$. Define $\bar T:=\alpha^{-2}T$ and
  observe that with $R\ge A\alpha^{-1}(T+1)\ge
  A\alpha(\alpha^{-2}T+1) = \alpha A(\bar T+1)$ we have $\bar
  R:=\alpha^{-1}R\ge A(\bar T+1)$ as required in Proposition
  \ref{prop:lower-bd-curv-1}.
  Let $g(t)$ be an
  instantaneously complete Ricci
  flow such that $\bigl(\Mf,g(0)\bigr)$ contains a cigar of length $R$
  at scale $\alpha$. Then $\bigl(\bar g(s)\bigr)_{s\in[0,\alpha^{-2}T]}$ with
  $\bar g(s)= \alpha^{-2} g(\alpha^2s)$ is
  another instantaneously complete Ricci flow such that
  $\bigl(\Mf,\bar g(0)\bigr)$ contains a cigar of length $\bar
  R=\alpha^{-1}R$ at unit scale and we may apply Proposition
  \ref{prop:lower-bd-curv-1} to conclude
  \[ \sup_\Mf K[g(t)] = \sup_\Mf K[\alpha^2\bar g(\alpha^{-2}t)] = \sup_\Mf\alpha^{-2} K[\bar
  g(\alpha^{-2}t)] \ge \alpha^{-2}\varepsilon \]
  for all $t\in[0,T]$.
\end{proof}

\section{Ricci flow of unbounded curvature}
\label{sec:ricci-flow-unbounded}
In this section we give a proof of Theorem \ref{thm:ex-unbounded-curv}.
It is clearly sufficient to find an instantaneously complete Ricci flow rather than a complete one since we can always adjust the flow a little in time (i.e. consider $g(t+\ep)$).

The strategy is to construct an appropriate $g(0)$ containing lots of cigars at different scales and of different lengths, then to flow the metric using Theorem \ref{thm:ic-existence} and apply the results of the previous section, and in particular Corollary \ref{cor:lower-bd-curv-a}, to show that the curvature is unbounded.

\begin{proof}[Proof of \textbf{Theorem \ref{thm:ex-unbounded-curv}}]
  Let $\bar g_0$ be any conformal metric on $\Mf$.
  Since $\Mf$ is a noncompact Riemann surface, there
  exists a sequence of pairwise
  disjoint, simply connected, open subsets
  $\bigl(U_k\bigr)_{k\in\mathbb N}\subset \Mf$ moving off to infinity in the sense that for all $\Om\Subset\Mf$, the sets $U_k$ are disjoint from $\Om$ for sufficiently large $k$. For the sequences
  of scales $\alpha_k := \frac1k$ and times $T_k:=k$, Corollary
  \ref{cor:lower-bd-curv-a} permits us to choose a monotonically
  increasing sequence of lengths $R_k=Ak(k+1)\sim k^2$ compatible with $\alpha_k$ and $T_k$. By virtue of the
  Uniformisation
  theorem, for each
  $k\in\mathbb N$ there exists a conformal diffeomorphism
  \[ \psi_k: U_k \longrightarrow
  \Disc_{2\sinh(\alpha_k^{-1}R_k)}\subset\mathbb C \]
  onto the complex disc of radius
  $2\sinh\bigl(\alpha_k^{-1}R_k\bigr)$. Abbreviate
  \[ V_k := \psi_k^{-1}\left(\Disc_{\sinh(\alpha_k^{-1}R_k)}\right)\Subset U_k \]
  and choose smooth cut-off functions $f_k\in
  C^\infty\bigl(\Mf,[0,1]\bigr)$ such that $\spt f_k \subset U_k$
  and $f_k\bigr|_{V_k} \equiv1$.
  We can then define a new smooth metric
  $g_0$ on $\Mf$ by
  \[ g_0 := \left(1-\sum_{k\in\mathbb N} f_k\right)\bar g_0 +
  \sum_{k\in\mathbb N} f_k \psi_k^*\left(\alpha_k^2 g_\Sigma\right). \]
  The surface $\bigl(\Mf,g_0\bigr)$ has infinite area since the truncated cigars we are pasting in each have area bounded below by some uniform positive constant (in fact their area is converging to infinity):
  \begin{align*}
    \Vol_{g_0} \Mf &\ge \sum_{k\in\mathbb N} \Vol_{g_0} V_k
    = \sum_{k\in\mathbb N} \alpha_k^2
    \Vol_{g_\Sigma} \bigl(\gBall_{g_\Sigma}(0;\alpha^{-1}_kR_k)\big)\\
    &\ge 2\pi \sum_{k\in\mathbb N} \alpha_k\bigl(R_k - \alpha_k\bigr)
    \ge 2\pi \sum_{k\in\mathbb N} \frac{R_1-\frac1k}k = \infty,
    &\text{using \eqref{eq:cigar-area-cyl-r}}
  \end{align*}
  and so Theorem \ref{thm:ic-existence} provides an immortal,
  instantaneously complete Ricci flow $\bigl(g(t)\bigr)_{t\in[0,\infty)}$ with
  $g(0)=g_0$. To see that the curvature of $g(t)$ is unbounded at an arbitrary time $t\in [0,\infty)$, note that for any $k\in\N$ with
  $k>t$, we have $t\in [0,T_k]$, so by applying Corollary
  \ref{cor:lower-bd-curv-a} to the flow, considering the $k^{th}$ cigar in our construction, we find that
  \[ \sup_\Mf K[g(t)] \ge \varepsilon\alpha_{k}^{-2} =\varepsilon k^2, \]
  and by letting $k\to\infty$ we find that
  $\sup_\Mf K[g(t)]=\infty$ as claimed.
\end{proof}

\begin{appendix}

  \section{Supporting results}
  The following variant of Perelman's Pseudolocality theorem is due to
  B.-L. Chen. It will be essential to rule out bad behaviour
  of the type in Theorem \ref{thm:sucked_out1}.
  \begin{thm}{\rm\cite[Proposition 3.9]{Che09}}
    \label{thm:chen-pseudo}
    Let $\bigl(g(t)\bigr)_{t\in[0,T]}$ be a smooth Ricci flow
    on a surface $\Mf^2$. If we have for some $p\in\Mf$, $r_0>0$ and $v_0>0$
    \begin{compactenum}[(i)]
    \item $\gBall_{g(t)}(p;r_0)\Subset\Mf$ for all $t\in[0,T]$;
    \item $\Bigl| K[g(0)]\Bigr| \le r_0^{-2}$ on $\gBall_{g(0)}(p;r_0)$;
    \item $\Vol_{g(0)}\gBall_{g(0)}(p;r_0) \ge v_0 r_0^2$,
    \end{compactenum}
    then there exists a constant $C=C(v_0)>0$ such that for all
    $t\in\bigl[0,\min\bigl\{T,\frac1C r_0^2\bigr\}\bigr]$
    \[ \Bigl| K[g(t)] \Bigr| \le 2r_0^{-2}
    \qquad\text{on } \gBall_{g(t)}\Bigl(p;\frac{r_0}2\Bigr). \]
  \end{thm}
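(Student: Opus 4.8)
The plan is to follow Perelman's proof of pseudolocality, adapted to surfaces and carried out entirely with \emph{local} quantities, since no global curvature bound is on offer. After a parabolic rescaling we may assume $r_0=1$. Arguing by contradiction, suppose the conclusion fails for every choice of $C=C(v_0)$: then there are surfaces $\Mf_i$, Ricci flows $g_i(t)$ on $[0,T_i]$, points $p_i$, and a fixed $v_0>0$ for which (i)--(iii) hold with $r_0=1$ while $\bigl|K[g_i(t_i)]\bigr|>2$ somewhere in $\gBall_{g_i(t_i)}(p_i;\tfrac12)$ for some $t_i\le 1/C_i$ with $C_i\to\infty$.

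\emph{Step 1 (point selection).} I would run Perelman's parabolic point-picking inside the controlled region $\bigcup_{t\le t_i}\gBall_{g_i(t)}(p_i;\tfrac12)$ to produce points $(x_i,s_i)$, $s_i\le t_i$, with curvature $Q_i:=\bigl|K[g_i(s_i)](x_i)\bigr|$ satisfying $Q_is_i\to\infty$ and $\bigl|K[g_i]\bigr|\le 4Q_i$ on a backward parabolic cylinder about $(x_i,s_i)$ whose radius, measured in the rescaled metric $Q_ig_i$, tends to infinity. The rescaled flows $\hat g_i(t):=Q_ig_i(s_i+Q_i^{-1}t)$ are then defined for $t\in[-\tau_i,0]$ with $\tau_i\to\infty$, obey $\bigl|K[\hat g_i]\bigr|\le 4$ on parabolic regions exhausting space-time, which stay compactly contained in $\Mf_i$ thanks to hypothesis (i), and have $\bigl|K[\hat g_i(0)](x_i)\bigr|=1$. (The whole force of the point-picking is that it manufactures the estimate $Q_is_i\to\infty$; this is what a naive maximum-principle argument applied to $\partial_tK=\Delta K+2K^2$ would deliver for free were a global bound available.)

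\emph{Step 2 (no local collapsing).} The decisive step, and the one I expect to be the main obstacle, is to prove that the $\hat g_i$ are $\kappa$-noncollapsed at $x_i$ below a fixed scale, with $\kappa=\kappa(v_0)>0$. Perelman's no-local-collapsing theorem cannot be quoted directly, because the curvature may be arbitrarily large just outside $\gBall_{g_i(\cdot)}(p_i;1)$; instead one localises the monotonicity of Perelman's $\mathcal W$-entropy (equivalently the reduced volume) against a cutoff supported where the geometry is controlled, the compact containment in (i) being precisely what legitimises this. This is the technical heart of Chen's argument. Granting it, the volume bound (iii) propagates forward, which together with the uniform curvature bound yields a uniform injectivity-radius lower bound at $x_i$.

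\emph{Step 3 (limit and contradiction).} By Hamilton's compactness theorem a pointed subsequence of $(\hat g_i,x_i)$ converges to a complete ancient Ricci flow $\bigl(g_\infty(t)\bigr)_{t\in(-\infty,0]}$ on a surface, $\kappa$-noncollapsed at all scales, with $\bigl|K[g_\infty]\bigr|\le 4$ and $\bigl|K[g_\infty(0)](x_\infty)\bigr|=1$; since $\hat g_i$ contains balls of radius $Q_i^{1/2}\to\infty$ about $x_i$, the limit is noncompact. The lower curvature bound for complete two-dimensional Ricci flows (the first theorem quoted above), applied with the initial time receding to $-\infty$, forces $K[g_\infty]\ge0$, so $g_\infty$ would be a two-dimensional $\kappa$-solution; but the only such solution is the shrinking round sphere, which is compact --- a contradiction. (Alternatively, a complete noncompact surface of bounded nonnegative curvature carrying an ancient flow must be flat or Hamilton's cigar, and the cigar fails to be $\kappa$-noncollapsed at large scales while the flat case is excluded by $\bigl|K[g_\infty(0)](x_\infty)\bigr|=1$.) This contradiction establishes the estimate, and since the only non-universal quantity entering was $\kappa(v_0)$, the constant $C$ depends on $v_0$ alone.
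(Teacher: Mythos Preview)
The paper does not prove this theorem: it is quoted in the appendix as a supporting result, with attribution to Chen \cite[Proposition~3.9]{Che09}, and no argument is given. There is therefore no proof in the present paper against which your proposal can be compared.

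For what it is worth, your sketch is broadly in the spirit of Perelman's pseudolocality argument and identifies the right pressure points (parabolic point-picking, a localised non-collapsing step, and a compactness/classification endgame). Two cautions if you pursue it. First, the assertion in Step~1 that $Q_is_i\to\infty$ is the crux and does not follow merely from $t_i\le C_i^{-1}\to 0$ and $Q_i>2$; one needs the point-picking to drive the curvature up fast enough relative to the elapsed time, and this is exactly where the almost-Euclidean hypotheses (ii)--(iii) must be fed in, typically via a localised entropy/reduced-volume argument rather than being a by-product of point selection alone. Second, in Step~3 the claim that the limit is noncompact because ``$\hat g_i$ contains balls of radius $Q_i^{1/2}\to\infty$'' is not automatic: point-picking only guarantees curvature control on parabolic cylinders of a \emph{fixed} rescaled size, and the passage to a complete noncompact limit again relies on the non-collapsing input and on hypothesis~(i) to keep those cylinders compactly contained. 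Chen's own proof in \cite{Che09} proceeds somewhat differently, combining his local lower scalar-curvature estimate with a direct doubling-time argument rather than extracting an ancient $\kappa$-solution, but that comparison lies outside the present paper.
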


  The following isoperimetric inequality due to G. Bol allows us
  estimate the maximum of the curvature on a surface's domain from
  below if we know its area and the length of its boundary. For an
  alternative proof using curvature flows,
  and further generalisations see
  \cite{Top98} and \cite{Top99}.

  \begin{thm}{\rm\cite[eqn.~(30) on p.~230]{Bol41}}
    \label{thm:bol-isop-ineq}
    Let $\Omega$ be a simply-connected domain on a surface
    $\bigl(\Mf^2,g\bigr)$, then
    \begin{equation}
      \label{eq:bol-isop-ineq}
      \bigl(\mathrm{L}_g\,\partial\Omega\bigr)^2 \ge 4\pi \Vol_g(\Omega)
      - \bigl(\Vol_g\Omega\bigr)^2 \sup_\Omega K[g].
    \end{equation}
  \end{thm}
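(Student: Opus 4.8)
Since the inequality is vacuous unless its right-hand side is positive, I assume $K_0 := \sup_\Omega K[g] < \infty$, $A := \Vol_g\Omega < \infty$, $L := \mathrm{L}_g\,\partial\Omega < \infty$ and $4\pi A - K_0 A^2 > 0$; when $K_0>0$ the last condition forces $A < 4\pi/K_0$. A routine approximation reduces to the case where $\partial\Omega$ is smooth. The plan is a comparison argument driven by the evolution of area and length under the inner parallel flow, modelled on the constant-curvature case: on a simply connected surface of constant curvature $K_0$ the geodesic discs already realise the equality $L^2 = 4\pi A - K_0 A^2$.

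For $t \ge 0$ set $\Omega_t := \{x \in \Omega : \dist_g(x,\partial\Omega) > t\}$, $A(t) := \Vol_g\Omega_t$ and $L(t) := \mathrm{L}_g\,\partial\Omega_t$. Formally, the first variation of area gives $A'(t) = -L(t)$, while the first variation of length combined with the Gauss--Bonnet theorem on the simply connected set $\Omega_t$ and the bound $K \le K_0$ gives
\[ L'(t) = -\int_{\partial\Omega_t} \kappa_g \,\mathrm{d}s = -2\pi + \int_{\Omega_t} K \,\mathrm{d}A \le -2\pi + K_0 A(t). \]
Hence $\Phi(t) := L(t)^2 - 4\pi A(t) + K_0 A(t)^2$ satisfies $\Phi'(t) = 2L(t)\bigl(L'(t) + 2\pi - K_0 A(t)\bigr) \le 0$, so $\Phi$ is non-increasing. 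Since $\int_0^\infty L(t)\,\mathrm{d}t = A(0) < \infty$, there is a sequence $t_n \to t_{\max}$ (the inradius of $\Omega$) with $L(t_n)\to 0$; along it $A(t_n)\to 0$ as well, so $\Phi(t_n)\to 0$, and monotonicity yields $\Phi(0) \ge 0$, which is exactly \eqref{eq:bol-isop-ineq}.

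The substance of the proof is justifying this computation when the inner parallel curves fail to be smooth. In general $\partial\Omega_t$ meets the cut locus of $\partial\Omega$, so $A$ and $L$ are merely Lipschitz, $\partial\Omega_t$ develops inward-pointing corners, and $\Omega_t$ may split into finitely many simply connected pieces. The saving feature is that every irregular contribution carries the helpful sign: crossing a cut point only decreases $L(t)$; applying Gauss--Bonnet componentwise to $\Omega_t = \bigsqcup_i \Omega_t^{(i)}$ replaces the $-2\pi$ above by $-2\pi N$ with $N\ge 1$ the number of components; and the exterior-angle corrections from corners of $\partial\Omega_t$ enter with a sign that only strengthens the bound on $L'(t)$. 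The honest route is to use the coarea formula to show $A$ is absolutely continuous with $A' = -L$ a.e.\ and that $L' \le K_0 A - 2\pi$ holds distributionally, so that $\Phi$ is non-increasing as a function of bounded variation and the endpoint argument survives. Alternatively, and more cleanly, one invokes the curvature-flow proofs of \cite{Top98} and \cite{Top99}, which replace the inner parallel flow by a smooth flow of $\partial\Omega$ and so avoid the cut-locus bookkeeping altogether; I would adopt that argument, verifying only that the monotone quantity produced there coincides with $\Phi$. The main obstacle is precisely this regularity issue at the cut locus — the formal monotonicity is immediate, but making it rigorous is where all the care goes.
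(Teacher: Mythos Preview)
The paper does not prove this theorem. It is stated in the appendix as a supporting result, cited from Bol's original article \cite{Bol41}, with a remark that alternative proofs via curvature flows appear in \cite{Top98} and \cite{Top99}. There is therefore no ``paper's own proof'' to compare against.

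Your proposal is the classical inner-parallel (Fiala--Hartman) argument, and the formal computation is correct. Two minor comments. First, your endpoint argument is slightly over-engineered: once you know $\Phi$ is non-increasing and $A(t)\to 0$ as $t\to t_{\max}$ (which follows because $\Omega_t$ shrinks to the cut locus of $\partial\Omega$, a set of measure zero), you get $\Phi(0)\ge \liminf_{t\to t_{\max}} \Phi(t) \ge \liminf_{t\to t_{\max}} L(t)^2 \ge 0$ directly, without needing a sequence along which $L(t_n)\to 0$; your claimed sequence is not obviously available when $t_{\max}<\infty$. Second, your honest acknowledgement of the cut-locus regularity issue is appropriate --- this is indeed where the work lies, and the literature handles it either by Hartman's careful analysis of the almost-everywhere differentiability of $L$ together with the favourable sign of the singular part, or (as the paper itself notes) by replacing the distance flow with a smooth curvature flow as in \cite{Top98,Top99}. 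Either route is acceptable; the paper simply quotes the result.
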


  The following result indicates that a Ricci flow can have extreme properties when it is not assumed to be complete.
  \begin{thm}{\rm(Taken from \cite{Top05})}
    \label{thm:sucked_out2}
    Given $\ep>0$ (however small) there exists a Ricci flow $g(t)$ on the closed unit disc $\overline\Disc\subset\C$ for $t\in [0,\ep)$ (smooth on $[0,\ep)\times \overline\Disc$) such that $g(0)$ is the standard flat metric on the unit disc, but for which
    $$\Vol_{g(t)}(\Disc)\to 0$$
    and
    $$\inf_\Mf K[g(t)]\to\infty$$
    as $t\upto \ep$. Moreover, if we write $g(t)=\ee^{2u}|\dz|^2$ then
    $sup_\Disc (u) \to -\infty$ as $t\upto \ep$.
  \end{thm}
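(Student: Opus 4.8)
The plan is to reduce everything to the scalar equation \eqref{eq:ricci-flow-cf} for the conformal factor, and then to manufacture a solution that collapses uniformly by trapping it between two explicit barriers; the point is that, $\overline\Disc$ being incomplete, we are free to prescribe what the flow does at $\partial\Disc$. Writing $g(t)=\ee^{2u}|\dz|^2$, I must produce $u$ solving $\pddt u=\ee^{-2u}\Delta u=-K[g(t)]$ on $[0,\ep)\times\overline\Disc$, smooth up to the parabolic boundary, with $u(0,\cdot)\equiv0$, such that $\sup_{\overline\Disc}u(t,\cdot)\to-\infty$ as $t\upto\ep$ — which at once gives $\Vol_{g(t)}\Disc=\int_\Disc\ee^{2u}\,\dx\to0$ and the final assertion of the theorem — and such that $\inf_{\overline\Disc}K[g(t)]\to+\infty$. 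The metric will be obtained as the solution of an initial--boundary value problem for \eqref{eq:ricci-flow-cf} with a time-dependent Dirichlet datum $u(t,\cdot)|_{\partial\Disc}=\phi(t)$ to be chosen, $\phi$ smooth, $\phi\equiv0$ on a neighbourhood of $t=0$ (so that every compatibility condition at the corner $\{0\}\times\partial\Disc$ is trivially met and $u$ is smooth there), nonincreasing, and with $\phi(t)\to-\infty$ as $t\upto\ep$.

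The core of the argument is the construction of the barriers. A model supersolution of \eqref{eq:ricci-flow-cf} is the inverted paraboloid $\bar u(t,z):=A(t)-c|z|^2$, with $c>0$ constant and $A$ solving $A'=-4c\,\ee^{-2A}$: indeed $\pddt\bar u=A'(t)$ while $\ee^{-2\bar u}\Delta\bar u=-4c\,\ee^{-2A(t)}\ee^{2c|z|^2}\le-4c\,\ee^{-2A(t)}=A'(t)$ on $\overline\Disc$, so $\pddt\bar u\ge\ee^{-2\bar u}\Delta\bar u$. Since $\ee^{2A(t)}=\ee^{2A(0)}-8ct$, this barrier blows down to $-\infty$ uniformly at time $\ep=\ee^{2A(0)}/(8c)$, and as soon as $A(0)\ge c$ it dominates the flat initial datum, $\bar u(0,\cdot)=A(0)-c|z|^2\ge A(0)-c\ge0$ on $\overline\Disc$; together these force $8c\ep\ge\ee^{2c}$, which is satisfiable precisely when $\ep\gtrsim1$. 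Matching it from below is the subsolution $\underline u(t,z):=\tfrac12\log\tfrac{\ep-t}{\ep}+c'(|z|^2-1)$, $c'>0$, which has $\underline u(0,\cdot)\le0$ and also collapses like $\tfrac12\log(\ep-t)$. Having fixed such barriers I would choose $\phi$ smooth, nonincreasing, $\equiv0$ near $t=0$, and lying in the (nonempty, of constant width) band $\underline u|_{\partial\Disc}\le\phi(t)\le\bar u|_{\partial\Disc}$ for all $t$.

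Standard quasilinear parabolic theory then produces a unique smooth solution $u$ of this initial--boundary value problem, and the comparison principle for \eqref{eq:ricci-flow-cf} — valid because at an interior point where $u_1-u_2$ attains a new positive maximum one has $\pddt(u_1-u_2)=\ee^{-2u_1}\Delta(u_1-u_2)\le0$ — yields the a priori sandwich $\underline u\le u\le\bar u$; in particular \eqref{eq:ricci-flow-cf} stays uniformly parabolic on each slab $[0,\ep-\delta]\times\overline\Disc$, so $u$ extends to all of $[0,\ep)$. From $u\le\bar u$ one gets $\sup_{\overline\Disc}u(t,\cdot)\le A(t)\to-\infty$, hence $\Vol_{g(t)}\Disc\to0$ and the final assertion. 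For the curvature bound the sandwich forces $u(t,\cdot)=\tfrac12\log(\ep-t)+O(1)$ uniformly, so the rescaled metric $\hat g:=g(t)/(\ep-t)$ stays uniformly comparable to $|\dz|^2$ and, in the rescaled time $\tau=-\log(\ep-t)$, satisfies the normalised Ricci flow $\partial_\tau\hat g=\hat g-2\Ric[\hat g]$; a Hamilton--Chow-type convergence argument adapted to the disc with (stabilised) boundary data shows $\hat g(\tau)$ converges to the constant-curvature-$\tfrac12$ metric, whence $K[g(t)]=K[\hat g(t)]/(\ep-t)\to+\infty$ uniformly on $\overline\Disc$.

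Two steps will need genuine work. First, the inverted-paraboloid supersolution above only dominates the flat metric for $\ep\gtrsim1$; to reach an arbitrarily small $\ep$ one must exploit more of the structure of \eqref{eq:ricci-flow-cf} — wherever $u\to-\infty$ the diffusion coefficient $\ee^{-2u}$ blows up, so a boundary datum decreasing sufficiently fast drives a rapid "collapse front" inward — and I expect this to be the main obstacle, to be handled either by a more carefully shaped two-parameter family of self-similar sub/super-solutions of the rescaled equation, or by a scaling reduction. Second, the convergence of the rescaled flow on the disc with boundary, needed for $\inf_{\overline\Disc}K[g(t)]\to+\infty$, requires an honest analogue of the Hamilton--Chow theorem in this setting (alternatively, one could try to bypass it with a third, lower, barrier engineered to force $\pddt u$ to be uniformly very negative, but this does not obviously help).
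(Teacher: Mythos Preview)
Your approach is genuinely different from the paper's and, as you yourself flag, incomplete at two essential points. The paper does not solve an initial--boundary value problem on $\overline\Disc$ at all. Instead it builds a \emph{closed} surface: cap a cylinder $\rSph^1_r\times[-1,1]$ with two round hemispheres of radius $r$ to obtain a topological sphere of area $4\pi(r+r^2)$, run the Ricci flow of Theorem~\ref{thm:ic-existence} on it (extinction time $\ep=\tfrac12(r+r^2)$, so $\ep$ is arbitrarily small for small $r$), and then pull the flow back to $\overline\Disc$ via the exponential map at a point in the middle of the cylindrical part. The pullback at $t=0$ is exactly the flat unit disc because the cylinder is flat; the area and curvature conclusions then follow immediately from Hamilton--Chow on the closed sphere.

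Both of your acknowledged gaps are real and are precisely what the paper's construction sidesteps. For the first, your paraboloid barrier constraint $8c\ep\ge\ee^{2c}$ is not an artefact: any spatially-constant-plus-bounded supersolution that starts above $0$ on $\overline\Disc$ and collapses at a finite time faces a similar lower bound on that time, because the parabolic scaling of \eqref{eq:ricci-flow-cf} that rescales time also shifts $u$ additively and hence destroys the normalisation $u(0,\cdot)=0$; getting $\ep$ small genuinely requires barriers whose shape changes with $\ep$ in a way you have not identified. For the second, a Hamilton--Chow-type convergence theorem for the \emph{disc with time-dependent Dirichlet data} is not in the literature and would be substantial to prove; the paper avoids this entirely by working on the closed sphere, where the result is classical. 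In short, your PDE-barrier strategy is plausible in outline but would require significant new analysis, whereas the paper's geometric pullback reduces everything to known facts in under half a page.
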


  \begin{proof}
    Let $\m$ be $\Sph^2$, and equip it with an initial metric $g_0$ so
    that $(\m,g_0)$ arises by taking a cylinder $\rSph^1_r\times [-1,1]$,
    where $\rSph^1_r$ is the circle of length $2\pi r$, and capping the
    ends with round hemispheres of the appropriate size:
    \begin{center}
      \includegraphics[width=100mm]{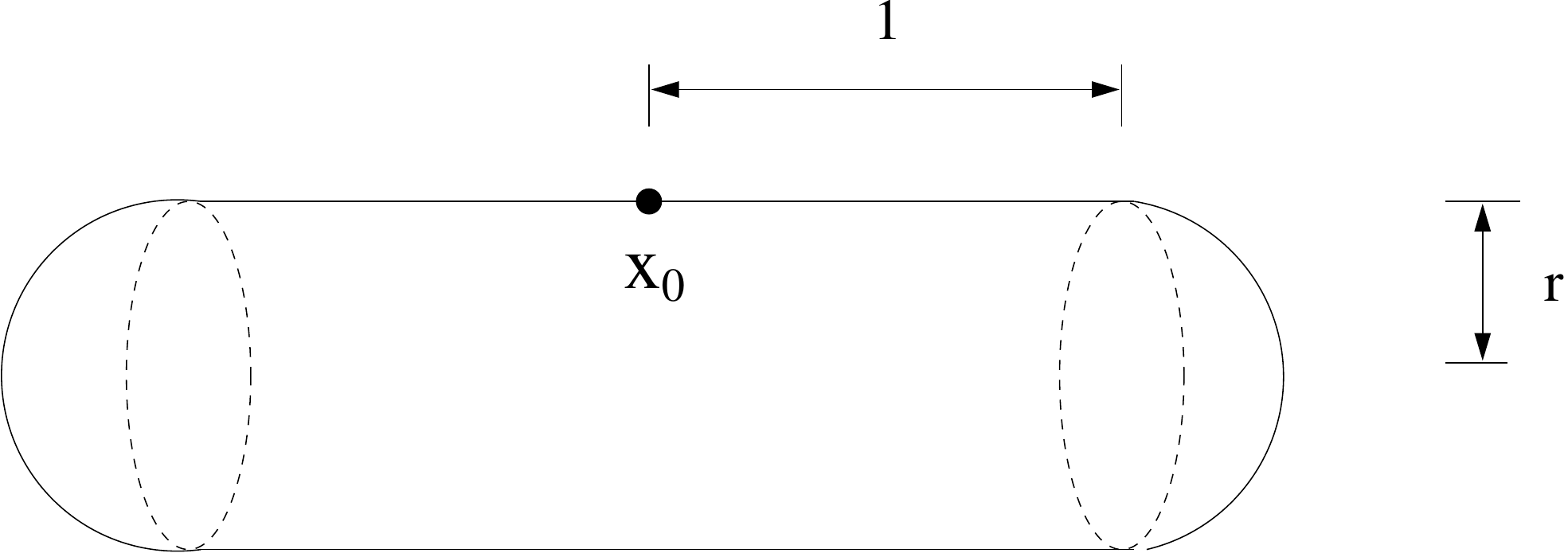}
    \end{center}
    If we take the Ricci flow $g(t)$ starting at $g(0)=g_0$ given by
    Theorem \ref{thm:ic-existence}
    then we know it exists precisely until time
    $\frac{1}{8\pi}\Vol_{g_0}\Sph^2=\half (r + r^2)=\ep$ (for appropriate $r>0$)
    and as $t\upto \ep$, we have
    $$\inf_{\Mf} K[g(t)]\to \infty.$$
    Now pick $x_0\in\m$ mid-way along the
    cylindrical part of $\Sph^2$. Then $g_0$ is flat on $\overline{\gBall_{g_0}(x_0;1)}$.
    Consider the map $F:\overline\Disc\to\Sph^2$ defined by restricting
    the exponential map
    $\exp_{x_0}$, with respect to the metric $g(0)$, where $\overline\Disc$ is
    seen as the closed unit $2$-disc in the tangent space $T_{x_0}\Sph^2$.
    This way, the image of $F$ does not intersect the hemispherical caps
    in the construction of the initial manifold, and is an immersion.

    We construct a Ricci flow $\hat g(t)$ on $\overline\Disc$ by defining
    $\hat g(t)=F^*\bigl(g(t)\bigr)$ at each time. Then $\bigl(\overline\Disc,\hat g(0)\bigr)$ is the flat unit 2-disc, the curvature blows up everywhere at time $\ep=\half (r + r^2)$, and the area decreases to zero at this time.
  \end{proof}

\end{appendix}

\bibliography{2d_ricci_flow}
\end{document}